\newcommand{\N}{\ensuremath{\mathbb{N}}}
\newcommand{\T}{\ensuremath{\mathbb{T}}}
\newcommand{\Z}{\ensuremath{\mathbb{Z}}}
\newcommand{\R}{\ensuremath{\mathbb{R}}}
\newcommand{\C}{\ensuremath{\mathbb{C}}}
\newcommand{\nchoosek}[2]{\left(\begin{array}{c}#1\\#2\end{array}\right)}
\newcommand{\ii}{\textnormal{i}}
\newcommand{\e}{\textnormal{e}}
\newcommand{\eip}[1]{\textnormal{e}^{2\pi\ii{#1}}}
\newcommand{\eim}[1]{\textnormal{e}^{-2\pi\ii{#1}}}
\newcommand{\norm}[1]{\left\Vert #1\right\Vert}
\newcommand{\dotprod}[2]{ \left< #1,#2 \right>}
\newcommand{\dotprodltm}[2]{ \left< #1,#2 \right>_{L^2(\T^d)}}
\newcommand{\floor}[1]{\left\lfloor#1\right\rfloor}
\newcommand{\ceil}[1]{\left\lceil#1\right\rceil}
\newcommand{\pmat}[1]{\begin{pmatrix} #1 \end{pmatrix}}
\newcommand{\mat}[1]{\ensuremath \mathbf{\boldsymbol{#1}}}
\newcommand{\set}[1]{\left\{ #1 \right\}}
\newcommand{\interc}[1]{\left[ #1 \right]}
\newcommand{\intercl}[1]{\left[ #1 \right)}
\newcommand{\intercr}[1]{\left( #1 \right]}
\newcommand{\abs}[1]{\left| #1 \right |}
\newcommand{\wrap}[1]{\left| #1 \right|_{\mathbb{T}}}
\newcommand{\wrapm}[1]{\left| #1 \right|_{\mathbb{T}^d}}
\newcommand{\normlt}[1]{\left\Vert #1 \right\Vert_{L^2(\mathbb{T})}}
\newcommand{\normltm}[1]{\left\Vert #1 \right\Vert_{L^2(\mathbb{T}^d)}}
\newcommand{\normlinft}[1]{\left\Vert #1 \right\Vert_{L^\infty(\mathbb{T})}}
\newcommand{\normlinftm}[1]{\left\Vert #1 \right\Vert_{L^\infty(\mathbb{T}^d)}}
\newcommand{\conj}[1]{\overline{#1}}
\renewcommand{\vec}[1]{\ensuremath \mathbf{\boldsymbol{#1}}}
\newcommand{\valpha}{{\vec{\alpha}}}
\newcommand{\dirich}[1]{\ifthenelse{\isempty{#1}}
	{d_n}
	{d_n\left(#1\right)} }		
\newcommand{\dirichd}[1]{\ifthenelse{\isempty{#1}}
	{d_n'}
	{d_n'\left(#1\right)} }	
\newcommand{\dirichdd}[1]{\ifthenelse{\isempty{#1}}
	{d_n''}
	{d_n''\left(#1\right)} }	
\newcommand{\dirichm}[1]{\ifthenelse{\isempty{#1}}
	{\tilde d_n}
	{\tilde d_n\left(#1\right)} }
\newcommand{\comp}[2]{\left(#1\right)_{#2}}
\DeclareMathOperator*{\smin}{\sigma_{\min}}
\DeclareMathOperator*{\dist}{dist}
\newtheorem{thm}{Theorem}[section]
\newtheorem{lemma}[thm]{Lemma}
\newtheorem{remark}[thm]{Remark}
\newtheorem{definition}[thm]{Definition}
\newtheorem{example}[thm]{Example}
\newtheorem{corollary}[thm]{Corollary}
\newtheorem{proposition}[thm]{Proposition}
\numberwithin{equation}{section}
\numberwithin{table}{section}
\numberwithin{figure}{section}
\newcommand{\bend}{\hspace*{0ex} \hfill \hbox{\vrule height
    1.5ex\vbox{\hrule width 1.4ex \vskip 1.4ex\hrule  width 1.4ex}\vrule
    height 1.5ex}}
\long\def\symbolfootnote[#1]#2{\begingroup%
\def\thefootnote{\fnsymbol{footnote}}\footnote[#1]{#2}\endgroup}
\newcommand{\PP}{\mathcal{P}}
\newcommand{\CC}{\mathcal{C}}
\newcommand{\dd}{\,\mathrm{d}}
\newcommand{\OO}[1]{\mathcal{O}\left(#1\right)}
\renewcommand{\mathbf}[1]{\ensuremath{\boldsymbol{#1}}}
\newcommand{\rank}{ \operatorname{rank}}
\renewcommand{\thefootnote}{\fnsymbol{footnote}}
\title{On the smallest singular value of multivariate Vandermonde matrices with clustered nodes}
\date{\today}
\author{Stefan Kunis\footnotemark[1] \qquad Dominik Nagel\footnotemark[1]}
\newif\ifshow
\begin{document}
\maketitle

\begin{abstract}
	We prove lower bounds for the smallest singular value of rectangular, multivariate Vandermonde matrices with nodes on the complex unit circle.
	The nodes are ``off the grid'', groups of nodes cluster, and the studied minimal singular value is bounded below by the product of inverted distances of a node to all other nodes in the specific cluster.
	By providing also upper bounds for the smallest singular value, this completely settles the univariate case and pairs of nodes in the multivariate case, both including reasonable sharp constants.
	For larger clusters, we show that the smallest singular value depends also on the geometric configuration within a cluster.

%	\medskip
	
	\noindent\textit{Key words and phrases}:
	Vandermonde matrix,
	colliding nodes,
	cluster,
	condition number,
	restricted Fourier matrices,
	frequency analysis,
	super resolution.
	\medskip
	
	\noindent\textit{2010 AMS Mathematics Subject Classification} : \text{
		15A18, % Eigenvalues, singular values and eigenvectors
		65T40, % Trigonometric approximation and interpolation
		42A15.  % Trigonometric interpolation
	}
\end{abstract}

\footnotetext[1]{
	Osnabr\"uck University, Institute of Mathematics
	\texttt{\{skunis,dnagel\}@uos.de}
}

% style rules:
% ¤ labels start with abreviation of what it labels followed by a colon, like thm:, la:, eq:, rm: cor:
% ¤ inequalities are labeled like equalities
% ¤ word after colon is written small, following words start with capital letter
% ¤ description of theorems etc ( in []-term) starts with capital letter
% ¤ The equation in an extra definition is not labeled. The definition itself is referred to in the text.
% ¤ Equations and alignments are coded the way that each = or \le starts a new line with no indent
% ¤ Conditions on quantities behind equations like ',\quad for all j...' start also a new line with no ident

%\tableofcontents
%%%%%%%%%%%%%%%%%%%%%%%%%%%%%%%%%%%%%%%%%%%%%%%%%%%%%%%%%%%%%%%%%%%%%%%%%%%%%%%%%%%%%%%%%%
\section{Introduction}  %%%%%%%%%%%%%%%%%%%%%%%%%%%%%%%%%%%%%%%%%%%%
%%%%%%%%%%%%%%%%%%%%%%%%%%%%%%%%%%%%%%%%%%%%%%%%%%%%%%%%%%%%%%%%%%%%%%%%%%%%%%%%%%%%%%%%%%
Vandermonde matrices appear e.g.~in the stability analysis of super-resolution algorithms like Prony's method \cite{Pr95,KuMoPeOh17}, the matrix pencil method \cite{HuSa90,Mo15}, the ESPRIT algorithm \cite{RoKa89,PoTa2017,LiLiFa19}, and the MUSIC algorithm \cite{Sc86,LiFan2016}.
We are interested in the case of nodes on the complex unit circle and a large polynomial degree, the matrices then generalize the classical discrete Fourier matrices to non-equispaced nodes and the involved polynomial degree is also called bandwidth.
If all nodes are well-separated, bounds on the condition number are established for example in \cite{Ba99,KuPo07,Mo15,AuBo17,Di19} for the univariate case and in \cite{KuPo07,KuMoPeOh17} at least partially for the multivariate case.
For node sets with distances of which some are below the inverse bandwidth, the behavior of the smallest singular value is subject of current research.
The seminal paper \cite{Do92} coined the term (inverse) super-resolution factor for the product of the bandwidth and the minimal separation of the nodes.
For $M$ nodes on a grid, the results in \cite{Do92,DeNg15} imply that the smallest singular value is at most as small as the inverse super-resolution factor raised to the power of $M-1$ if the super-resolution factor is greater than $1$.
More recently, the practically relevant situation of clustered nodes was studied in \cite{CaMo16,GoYo17,LiLi17,BaDeGoYo18,KuNa18,BaGoYo19,Di19}.
In the univariate case and for different setups, all of these refinements are able to replace the exponent $M-1$ by the smaller number $m-1$, where $m$ denotes the number of nodes that are in the largest cluster of nodes.

Here, we refine the proof technique developed in the second version of \cite{LiLi17} and extend it to arbitrary dimensions. In contrast to \cite{LiLi17}, we only use the information on the biggest cluster size, minimal separation between clusters and a the worst case cluster complexity (or a minimal separation between nodes) instead of taking the structure of each cluster into account.
In summary, our contributions are:
\begin{enumerate}
 \item a refined analysis of the univariate case, cf.~\cite{LiLi17}, eliminating the dependence on the total number of nodes, weakening a technical condition on the cluster separation, and improving constants, mainly by
  \begin{enumerate}
   \item a geometric packing argument and
   \item an improved estimate of Dirichlet kernels and Lagrange-like basis functions;
  \end{enumerate}
 \item a multidimensional generalization, including
 \begin{enumerate}
  \item a quantitative estimate for the well-separated case,
  \item a sharp estimate for pair clusters in higher dimensions, and
  \item an example on the limitations for larger clusters in higher dimensions.
 \end{enumerate}
\end{enumerate}
The outline of this paper is as follows:
Section 2 fixes notation, states the problem and gives some definitions. Furthermore, we generalize the so-called robust duality lemma from the second version of \cite{LiLi17} to the multivariate case.
In Section 3, we introduce some auxiliary functions which are used to prove our main results in Section 4.
Additionally, we give examples with specified parameters, present implications of our result for special node configurations like pair clusters and well separated nodes, and compare them with existing results. 
In Section 5, upper bounds on the smallest singular value for the univariate case and for pair clusters in higher dimensions are presented - these match the lower bounds from our main theorem.
Furthermore, an example of a triple cluster in two dimensions is given which shows that geometric properties beyond pairwise distances are needed for understanding the multivariate case.
Finally, in Section 6 numerical experiments are presented that support statements and comparisons from preceding sections.

\section{Preliminaries}
\begin{definition}[Setting]\label{def:setting}
	We denote the component of a vector by bracketing and setting a subscript, unless its components are defined differently.
	Let $d\in \N$ be a given dimension and $\Omega:=\set{\vec{t}_1,\dots,\vec{t}_M} \subset \intercl{0,1}^d$ a set of points.
	The corresponding nodes are given by $\vec{z}_j:= \eip{\vec{t}_j}\in \T^d, j=1,\dots,M$, where $\T:=\set{z\in\C \colon \abs{z}=1}$ denotes the complex unit circle.
	We identify the unit interval with the unit circle and therefore, we do not make a difference between the $\vec{t}_j$ and $\vec{z}_j$ and call them both nodes. Throughout the paper, $\norm{\cdot}_2$ denotes the euclidean norm for vectors and also its induced norm for matrices, and analogously $\norm{\cdot}_\infty$ the max-norm.
	Let $n\in \N$ be a degree, set $N:=n+1$ and assume $M<N^d$. 
	We are interested in the multivariate, rectangular Vandermonde matrix
	\begin{equation}\label{eq:defA}
		\mat{A}	
		:=\mat{A}(\Omega,n)
		:= \pmat{\vec{z}_j^\valpha}_{\substack{ j=1,\dots,M \\ \valpha\in\N_0^d,\,\norm{\valpha}_\infty \le n}}\in \C^{M\times N^d},
	\end{equation}
	and its smallest singular value
	\begin{equation*}
		 \smin(\mat{A}) := \min_{\substack{\vec{v}\in \C^{M}\\ \norm{\vec{v}}_2=1}}\norm{\mat{A}^*\vec{v}}_2.
	\end{equation*} 
\end{definition}

The following lemma builds the core of the proof technique developed in the second version of \cite{LiLi17} which we adapt here to the multivariate setting.
\begin{lemma}[{Robust duality, cf.~\cite[v2, Prop.~2]{LiLi17}}]\label{la:robustDual}
	Let $\Omega$ and $\mat{A}$ be given as in Definition \ref{def:setting}.
	If for any unit norm vector $\vec{v}=(v_1,\dots,v_M)^\top\in \C^M$, and $\vec{\epsilon}=(\epsilon_1,\dots,\epsilon_M)^\top\in \C^M with \norm{\vec{\epsilon}}_2\le 1$,
	there exists a trigonometric polynomial of max-degree at most $n\in \N$, i.e.,
	\begin{equation*}
	 f\in \PP(n):=\set{ g\colon \T^d \rightarrow \C \colon g(\vec{t}) 
		=\sum_{\vec{\alpha}\in \N_0^d, \norm{\valpha}_\infty\le n} \hat g_\valpha \eip{\valpha\cdot \vec{t}}, \hat g_\valpha \in \C},
	\end{equation*}
	such that $f(\vec{t}_j)=v_j+\epsilon_j$ for each $j=1,\dots,M$, then
	\begin{equation*}
		\norm{\mat{A}^*\vec{v}}_2 \ge (1-\norm{\vec{\epsilon}}_2) \norm{f}_{L^2(\T^d)}^{-1}.
	\end{equation*}
\end{lemma}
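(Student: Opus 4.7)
The approach I would take is a direct duality / Parseval argument. The key observation is that, for a trigonometric polynomial $f(\vec{t})=\sum_{\norm{\valpha}_\infty\le n}\hat f_\valpha \eip{\valpha\cdot\vec{t}}$ with coefficient vector $\hat{\vec{f}}=(\hat f_\valpha)_\valpha\in\C^{N^d}$, the matrix $\mat{A}$ is precisely the evaluation-at-the-nodes operator, in the sense that $\bigl(\mat{A}\hat{\vec{f}}\bigr)_j = \sum_\valpha \hat f_\valpha \vec{z}_j^\valpha = f(\vec{t}_j)$ for each $j=1,\dots,M$. Under the hypothesis of the lemma, we therefore have $\mat{A}\hat{\vec{f}}=\vec{v}+\vec{\epsilon}$.

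The second ingredient is Parseval's identity on $\T^d$, which for a polynomial in $\PP(n)$ gives $\norm{f}_{L^2(\T^d)}^2=\sum_\valpha|\hat f_\valpha|^2 = \norm{\hat{\vec{f}}}_2^2$, so the $L^2$-norm of $f$ is nothing but the Euclidean norm of its coefficient vector.

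With these two ingredients, the plan is then to pair $\vec{v}$ with $\mat{A}\hat{\vec{f}}$ and shift the adjoint to the right-hand side:
\begin{equation*}
  \dotprod{\vec{v}}{\mat{A}\hat{\vec{f}}} = \dotprod{\mat{A}^*\vec{v}}{\hat{\vec{f}}}.
\end{equation*}
The left-hand side equals $\dotprod{\vec{v}}{\vec{v}+\vec{\epsilon}} = 1 + \dotprod{\vec{v}}{\vec{\epsilon}}$ since $\vec{v}$ has unit norm, and a reverse triangle inequality combined with Cauchy--Schwarz bounds its modulus from below by $1-\norm{\vec{\epsilon}}_2$. The right-hand side is bounded from above by $\norm{\mat{A}^*\vec{v}}_2\,\norm{\hat{\vec{f}}}_2 = \norm{\mat{A}^*\vec{v}}_2\,\norm{f}_{L^2(\T^d)}$, again by Cauchy--Schwarz. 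Dividing yields the asserted inequality.

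There is no real obstacle here beyond bookkeeping; the only subtle point is making sure that the indexing of the Fourier coefficients in $\PP(n)$ matches the column indexing of $\mat{A}$ (both run over $\valpha\in\N_0^d$ with $\norm{\valpha}_\infty\le n$), so that the identification $\mat{A}\hat{\vec{f}}=(f(\vec{t}_j))_j$ and the Parseval identity $\norm{f}_{L^2(\T^d)}=\norm{\hat{\vec{f}}}_2$ really refer to the same coefficient vector. Once that alignment is fixed, the argument is a one-line dualization and is dimension-independent, which is exactly why the univariate lemma of \cite{LiLi17} generalizes to $d>1$ without additional work.
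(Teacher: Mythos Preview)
Your proof is correct and follows essentially the same route as the paper. The paper phrases the pairing via a discrete measure $\mu=\sum_j v_j\delta_{\vec{t}_j}$ and computes $\int_{\T^d}\conj{f}\dd\mu$ in two ways, but this is exactly your identity $\dotprod{\vec{v}}{\mat{A}\hat{\vec{f}}}=\dotprod{\mat{A}^*\vec{v}}{\hat{\vec{f}}}$ written in integral language; the two key ingredients (Cauchy--Schwarz and Parseval) and the final estimate are identical.
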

\begin{proof}
	Define the discrete measure $\mu := \sum_{j=1}^{M} v_j \delta_{\vec{t}_j}$. Its Fourier coefficients are given by
	\begin{equation*}
		\hat \mu (\valpha) 
		= \int_{\T^d} \eim{\vec{t}\cdot \vec{\valpha}} \dd \mu(\vec{t})
		= \sum_{j=1}^{M}v_j \vec{z}_j^{-\valpha} 
		= \comp{\mat{A}^*\vec{v}}{\valpha},\quad \valpha\in\N_0^d,\norm{\valpha}_{\infty}\le n.
	\end{equation*}
	On the one hand, using the interpolation property of $f$ and the lower triangular inequality of the absolute value, we have 
	\begin{equation*}
		\abs{\int_{\T^d} \conj{f} \dd \mu} 
		= \abs{ \sum_{j=1}^{M}\conj{f(\vec{t}_j)} v_j } 
		= \abs{ \norm{\vec{v}}_2^2 + \sum_{j=1}^{M} \conj{\epsilon_j} v_j}
		\ge \norm{\vec{v}}_2^2 - \norm{\vec{v}}_2\norm{\vec{\epsilon}}_2
		= (1-\norm{\vec{\epsilon}}_2),
	\end{equation*}
	and on the other hand, using $f\in\PP(n)$, the Cauchy--Schwarz inequality and Parseval's identity, we have
	\begin{equation*}
		\abs{\int_{\T^d}\conj f \dd\mu}
		= \abs{\sum_{\valpha\in\N_0^d,\norm{\valpha}_{\infty} \le n}\conj{\hat f_\valpha} \hat \mu(\valpha) }
		\le \norm{\vec{\hat f}}_2\norm{\mat{A}^* \vec{v}}_2 
		= \norm{f}_{L^2(\T^d)} \norm{\mat{A}^* \vec{v}}_2.
	\end{equation*}
\end{proof}
The advantage of that lemma is, if $\vec{v}\in \C^M$ is a unit norm vector such that $\smin(\mat{A})=\norm{\mat{A}^*\vec{v}}_2$, it suffices to construct a function $f\in\PP(n)$ almost interpolating the values of $\vec{v}$ in order to provide a lower bound.

The following definition is similar to the `localized clumps' model from the second version of \cite{LiLi17}. We did some renaming in terms of \cite{BaDeGoYo18} and use a normalization by $N$ rather than $n$.
\begin{definition}[Geometry of nodes]\label{def:geom}
	The wrap-around distance between two nodes $\vec{t}, \vec{t}' \in [0,1)^d$ is defined by
	\begin{equation*}
		\wrapm{\vec{t}-\vec{t}'}:= \min_{\vec{r}\in \Z^d} \norm{\vec{t}-\vec{t}'+\vec{r}}_\infty.
	\end{equation*}
	\begin{enumerate}
	\item A subset of nodes is called \emph{cluster} if it is contained in a cube of length $1/N$. For two clusters $\Lambda',\Lambda'' \subset \Omega$, we define
	\begin{equation*}
	 \dist(\Lambda',\Lambda''):=\min\{\wrapm{\vec{t}'-\vec{t}''}:\;\vec{t}'\in\Lambda',\;\vec{t}''\in\Lambda''\}. 
	\end{equation*}
	\item The node set $\Omega$ is called a \emph{clustered node configuration} with $L$ clusters if it can be written as 
	\begin{equation*}
		\Omega = \bigcup_{l=1}^{L} \Lambda_l,
	\end{equation*}
	where the $\Lambda_l$ are clusters and the \emph{(normalized) minimal cluster separation} $\rho$ fulfills
	\begin{equation*}
		\rho:=N\min_{1\le l < l' \le L}\dist(\Lambda_{l},\Lambda_{l'}) > 1.
	\end{equation*} 
	We order $\abs{\Lambda_1}\ge\abs{\Lambda_2}\ge\hdots\ge\abs{\Lambda_L}$ and denote the cardinality of the biggest cluster by $\lambda:=\abs{\Lambda_1}$.
	In passing, we note that the node set $\Omega$ is called \emph{well separated} with normalized separation $\rho$ if $\lambda=1$.
	Moreover, we define the partitioning of $\T^d$ into \emph{shells} by
	\begin{equation*}
	J_m:=J_m(\Omega,N,\rho)
	:= \set{\vec{t} \in \T^d \colon m\rho\le N\wrapm{\vec{t}}<(m+1)\rho}, \qquad m=0,\hdots,\floor{\frac{N}{2\rho}}.
	\end{equation*}
	\item The \emph{cluster complexity} is defined by
	\begin{equation*}
		\CC
		:=\CC(\Omega,N)
		:= \max_{j=1,\hdots,M} \prod_{\vec{t}' \in \Omega \colon 0<
			 \wrapm{\vec{t}_j-\vec{t}'}\le 1/N} \frac{1}{N\wrapm{\vec{t}_j-\vec{t}'}}
	\end{equation*}
%	The \emph{} of the node set $\Omega$ is $\CC := \max_{j=1,\hdots,M} \CC_j$ and we finally define the \emph{(normalized) minimal separation} 
	and finally, we define the \emph{(normalized) minimal separation}
	\begin{equation*}
	 \tau:=N\min_{1\le j < j' \le M} \wrapm{\vec{t}_j-\vec{t}_{j'}}.
	\end{equation*}
	\end{enumerate}
\end{definition}

\begin{figure}[h]
\begin{center}
	\begin{subfigure}{0.32\textwidth}
	    \includegraphics[width=1.0\linewidth]{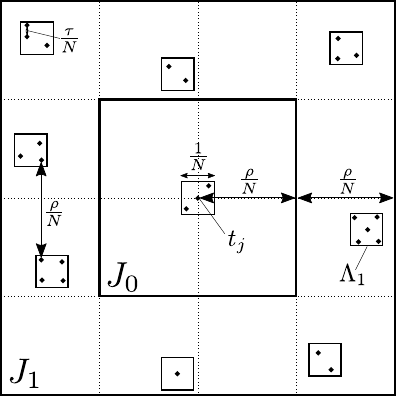}
	\end{subfigure}
	\hspace{1cm}
	\begin{subfigure}{0.4\textwidth}
		\includegraphics[width=1.0\linewidth]{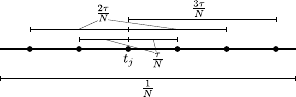}
	\end{subfigure}
	\caption{Left: Clustered node configuration and partitioning into shells for $d=2$; right: a cluster which maximizes $\CC$ for $d=1$.}
	\label{fig:cluster2d1d}
\end{center}
\end{figure}

\begin{remark}(Geometry of nodes)\label{rm:nodeGeometry}
 With the notation of Definition \ref{def:geom}, we note that
 \begin{enumerate}
  \item the inequality $\sin(x) \ge 2x/\pi $ for $0\le x \le \pi/2$ implies
  \begin{equation}\label{eq:node_distance}
		\abs{z - z'}= 2 \sin(\pi \wrap{t - t'})\ge 4 \wrap{t - t'}, \qquad z:=\eip{t},\,z':=\eip{t'} \in \T.
	\end{equation}
	A higher order approximation is given in the second version of \cite{LiLi17},
	\begin{equation}\label{eq:node_distance_highO}
		\abs{z - z'} 
		\ge 2\pi \left(1-\frac{\pi^2 \wrap{t-t'}^2}{3}\right)^{1/2} \wrap{t-t'}.
	\end{equation}
  \item A necessary condition on $N$ for the existence of a clustered node configuration with $L$ clusters is $L\rho^d\le N^d$, with equality if and only if all nodes are equispaced.
  Similarly, if $N\ge L^{1/d}(\rho+1)$, then equispaced cluster with arbitrary node configuration within each cluster exist.
  Moreover, the cluster separation $\rho$ needs to scale at least linearly in the biggest cluster size $\lambda$.
  If on the contrary, $\rho < \lambda/4$ and $d=1$ for simplicity of the argument, then let $\lambda$ nodes form a cluster (length at most $1/N$) and place one node as far as possible away.
  With fixed $N$, we have $2\rho = N-1$ and therefore, $\rho<\lambda/4$ is equivalent to $N\le M/2+1/2$ and thus $\rank(\mat{A})\le N<M$.
  On the other hand, $\rho > \lambda$ already implies $N\ge L(\rho+1) \ge M$.
	
 Finally note, that the packing argument in \cite[Lemma 4.5]{KuPo07} yields
	\begin{equation*}
	\abs{J_m\cap \Omega}
	\le 2^d\left(2^d-1\right)m^{d-1}\lambda,
	\end{equation*}
	see also Figure \ref{fig:cluster2d1d} (left).

%\end{remark}
%\begin{remark}[Dependency on nodes of the cluster separation]
%
%If $\rho<\lambda$, then there is a set of nodes such that rank of $\mat{A}$ is strictly smaller than $M$.\todo{richtig?}
%
%The dependence of the polynomial degree $n$ on the biggest cluster size $\lambda$ is necessary. To see that, let $d=1$ and assume we have one cluster of nodes. If the dependence is not given, we can add nodes in that cluster, according to our clustered node configuration model, without changing other parameter. We get $M>N$ so that $\rank{\mat{A}}<M$.
%
%The condition on $\rho$ implies $\rho\ge \lambda$ and therefore, for clustered node configurations of more than one cluster, already $N\ge L\rho \ge L\lambda\ge M$.

%	Due to the famous Mairhuber-Curtis Theorem, see e.g. \todo{cite{Wendland} and Armin Iske}, a condition on the polynomial degree $n$ dependent on the number of nodes must come into play, either directly, or indirectly through the cluster separation or rather the node complexity inside the clusters. 
%	Assume there is a fixed condition on the polynomial degree $n$ like in Theorem \ref{thm:lowerBoundSmin} and the cluster separation \eqref{eq:clusterSepThm} is neither dependent on the complexities $\CC_j$ nor on the number of nodes $M$. Then, it must somehow depend on the number of nodes at least inside the cluster, otherwise one could add nodes inside the cluster (a cluster was contained in an interval of length $1/n$) without influencing the polynomial degree $n$. 
%\end{remark}

  \item The cluster complexity can be upper bounded by the normalized minimal separation as follows.
  For $d\in\N$, we have $\CC \le \tau^{1-\lambda}$ and equality for $\lambda=1$ and $\lambda=2$. Refined for $d=1$, it is easy to see that the cluster complexity is maximized by an equispaced cluster with $\lambda$ nodes separated by $\tau/N$ and taking distances from the center node, see Figure \ref{fig:cluster2d1d} (right).
  By logarithmic convexity, direct calculation, and Stirling's approximation, we thus have
	\begin{equation}\label{eq:CupperBound}
		\CC 
		\le \frac{1}{\tau^{\lambda-1}} \left(\floor{\frac{\lambda-1}{2}}! \cdot \ceil{\frac{\lambda-1}{2}}!\right)^{-1}
		\le \frac{1}{\tau^{\lambda-1}\Gamma\left(\frac{\lambda+1}{2}\right)^{2}}
		\le \frac{(2\e)^{\lambda-1}}{\lambda^{\lambda}} \cdot \frac{1}{\tau^{\lambda-1}}
	\end{equation}
	and similarly
	\begin{equation}\label{eq:ClowerBound}
		\max_{\Omega}\CC 
		= \frac{1}{\tau^{\lambda-1}} \left(\floor{\frac{\lambda-1}{2}}! \cdot \ceil{\frac{\lambda-1}{2}}!\right)^{-1}
		\ge \frac{(2\e)^{\lambda-1}}{\lambda^{\lambda+1}} \cdot \frac{1}{\tau^{\lambda-1}},
	\end{equation}
	where the maximum is taken over all clustered node configurations with normalized minimal separation $\tau$ and the largest cluster containing $\lambda$ nodes.
 \end{enumerate}
\end{remark}

%%%%%%%%%%%%%%%%%%%%%%%%%%%%%%%%%%%%%%%%%%%%%%%%%%%%%%%%%%%%%%%%%%%%%%%%%%%%%%%%%%%%%%%%%%
\section{Auxiliary functions}%%%%%%%%%%%%%%%%%%%%%%%%%%%%%%%%%%%%%%%%%%%%%%%%%%%%%%%%%%%%%
%%%%%%%%%%%%%%%%%%%%%%%%%%%%%%%%%%%%%%%%%%%%%%%%%%%%%%%%%%%%%%%%%%%%%%%%%%%%%%%%%%%%%%%%%%

\begin{lemma}[Modified Dirichlet kernel]\label{la:modDirich}
	For $m, \beta \in \N$ the modified Dirichlet kernel is defined as $d_m\colon \intercl{0,1}\rightarrow \C$,
	\begin{equation*}\label{eq:modDirich}
		d_m(t)
		:= \frac{1}{m+1} \sum_{k=0}^{m} \eip{kt} 
		= 	\begin{cases}
				1,& t=0,\\
				\frac{\e^{\pi \ii m t}}{m+1}\cdot \frac{\sin(\pi(m+1)t)}{\left(\sin(\pi t)\right)},& t\ne 0.
			\end{cases}
	\end{equation*}
	We define the powers of the multivariate modified Dirichlet kernel by
	\begin{equation*}
		d_m^\beta\colon \intercl{0,1}^d\rightarrow \C,
		\quad d_m^\beta(\vec{t}) := \left(\prod_{\ell=1}^{d} d_m(\comp{\vec{t}}{\ell})\right)^\beta \in \PP(m \beta).
	\end{equation*}
	If $m\ge \beta$ and $\vec{t} \in \T^d\setminus \set{\vec{0}}$, then
	\begin{enumerate}
		\item $\abs{d_m(\vec{t})}\le d_m(\vec{0}) = 1$,
		\item $\abs{d_m(\vec{t})}\le \frac{1}{2(m+1) \wrapm{\vec{t}}}$,
		\item $\norm{d_m^\beta}_{L^2(\T^d)}^2 \le \frac{1}{(m+1)^d \beta^{d/2}}$,
		\item $\abs{\dotprod{d_m^\beta}{d_m^\beta(\cdot - \vec{t})}_{L^2(\T^d)}} \le \frac{1}{2(m+1)^d \beta^{(d-1)/2}}\cdot \frac{1}{(m+1)^{\beta}\wrapm{\vec{t}}^{\beta}}$.
	\end{enumerate}
\end{lemma}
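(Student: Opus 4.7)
Parts (i) and (ii) are direct. For (i), I observe that the multivariate kernel factorizes over coordinates, and each univariate factor satisfies $|d_m(t)| \le (m+1)^{-1}\sum_{k=0}^m|\eip{kt}| = 1$ by the triangle inequality, with equality at $\vec t = \vec 0$. For (ii), I start from the closed form $d_m(t) = \textnormal{e}^{\ii\pi mt}(m+1)^{-1}\sin(\pi(m+1)t)/\sin(\pi t)$, giving $|d_m(t)| \le 1/((m+1)|\sin(\pi t)|)$, and apply the elementary inequality $|\sin(\pi t)| \ge 2\wrap{t}$ on $[-1/2,1/2]$. For the multivariate version, pick the coordinate $\ell^*$ that attains $\wrapm{\vec t} = \wrap{\comp{\vec t}{\ell^*}}$, apply (ii) on that coordinate and (i) on the remaining $d-1$ coordinates.

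For (iii), the tensor product structure $\|d_m^\beta\|_{L^2(\T^d)}^2 = (\int_\T|d_m|^{2\beta}\dd t)^d$ reduces the claim to the univariate inequality $\int_\T|d_m|^{2\beta}\dd t \le 1/((m+1)\sqrt\beta)$. For $\beta = 1$ this is Parseval. For $\beta \ge 2$, I would rewrite $|d_m|^2 = F_m/(m+1)$ where $F_m$ is the Fej\'er kernel, so the task becomes $\int_\T F_m^\beta \le (m+1)^{\beta-1}/\sqrt\beta$. I would split the integral: on an inner window $\wrap{t} \le t_0$ majorize $F_m(t)/(m+1)$ by a Gaussian $\exp(-c(m+1)^2t^2)$ (whose $\beta$-th power integrates to give the $\sqrt\beta$-improvement after a change of variables), and on the complementary tail use the pointwise bound $F_m(t) \le 1/(4(m+1)t^2)$ coming from (ii). Calibrating $t_0$ and the Gaussian constant so that the two pieces combine with sharp constant $1$ under the hypothesis $m \ge \beta$ is the delicate step: the naive $\|d_m\|_\infty^{2\beta-2}\|d_m\|_{L^2(\T)}^2 = 1/(m+1)$ bound is off by precisely a factor of $\sqrt\beta$, so one genuinely needs concentration of $F_m$ around $t = 0$.

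For (iv), the inner product factors per coordinate,
\begin{equation*}
\dotprod{d_m^\beta}{d_m^\beta(\cdot - \vec t)}_{L^2(\T^d)} = \prod_{\ell=1}^d \int_\T d_m(s)^\beta \overline{d_m(s - \comp{\vec t}{\ell})^\beta}\dd s.
\end{equation*}
For the $d-1$ coordinates $\ell \ne \ell^*$, Cauchy--Schwarz and (iii) yield $\|d_m^\beta\|_{L^2(\T)}^2 \le 1/((m+1)\sqrt\beta)$ per factor, producing the aggregate $1/((m+1)^{d-1}\beta^{(d-1)/2})$. For the distinguished coordinate $\ell^*$ with $\tau = \wrapm{\vec t}$, the key pointwise estimate I plan to use is $|d_m(s)\,d_m(s-\tau)| \le 1/((m+1)\tau)$, following from (ii) applied to whichever of $\wrap{s}, \wrap{s-\tau}$ is $\ge \tau/2$ --- at least one must be, by $\wrap{s} + \wrap{s-\tau} \ge \tau$. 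I would split $\T$ into $V_1 = \{\wrap{s} \ge \tau/2\}$ and $V_2 = \{\wrap{s-\tau} \ge \tau/2\}$, whose union covers $\T$; on $V_1$ the factor $|d_m^\beta(s)|$ is pointwise $\le 1/((m+1)\tau)^\beta$, and symmetrically on $V_2$. The residual integrals reduce to $\int_\T|d_m|^\beta \le \|d_m\|_\infty^{\beta-2}\|d_m\|_{L^2(\T)}^2 = 1/(m+1)$ for $\beta \ge 2$ (the case $\beta = 1$ is immediate from the identity $d_m * d_m = d_m/(m+1)$), and careful book-keeping of the $V_1\cap V_2$ overlap delivers the constant $\tfrac{1}{2}$ in the claim.
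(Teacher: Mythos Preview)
Your treatment of (i) and (ii) is correct and identical to the paper's.

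For (iii), your Gaussian-plus-tail scheme is precisely what the paper does for $\beta\ge 4$: after the substitution $x=(m+1)t$ the paper invokes the sharp majorant
\[
\frac{\sin\pi x}{(m+1)\sin\frac{\pi x}{m+1}}\le \exp\Bigl(-\tfrac{4\pi^2}{25}x^2\Bigr),\qquad m\ge 4,
\]
which produces the main term $\tfrac{5}{2\sqrt{2\pi}}\cdot\beta^{-1/2}\approx 0.997\,\beta^{-1/2}$ after the Gaussian integral, with a negligible tail $\tfrac{2^{1-2\beta}}{2\beta-1}$. What you have not accounted for is that this Gaussian constant only barely clears $1$ and requires $m\ge 4$; the cases $\beta\in\{2,3\}$ (where $m\ge\beta$ may force $m<4$) are handled in the paper by direct Parseval computation of $\normlt{d_m^2}^2$ and $\normlt{d_m^3}^2$. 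Your ``delicate calibration'' remark is honest but leaves exactly this gap.

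For (iv), your pointwise product bound $|d_m(s)d_m(s-\tau)|\le ((m+1)\tau)^{-1}$ is correct and is also the paper's starting point. However, the $V_1/V_2$ split you propose cannot deliver the constant $\tfrac12$. Each of $\int_{V_1}$ and $\int_{V_2}$ is bounded by $((m+1)\tau)^{-\beta}\int_\T|d_m|^\beta\le ((m+1)\tau)^{-\beta}/(m+1)$, and since the integrand is nonnegative you only have $\int_\T\le\int_{V_1}+\int_{V_2}$; the overlap subtraction goes the wrong way to help. You obtain the constant $2$, a factor of four off. The paper instead peels off $\beta-2$ copies of the pointwise product bound and reduces to the case $\beta=2$, which it evaluates \emph{explicitly} via Parseval:
\[
\bigl|\langle d_m^2,d_m^2(\cdot-t)\rangle_{L^2(\T)}\bigr|
=\frac{\bigl|(z{+}1)z(z^{m+1}{-}z^{-m-1})+4(m{+}1)z(1{-}z)\bigr|}{(m+1)^4|z-1|^3}
\le \frac{1}{2(m+1)^3\wrap{t}^2},
\]
with $z=\eip{t}$. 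This closed-form step is where the factor $\tfrac12$ originates; without it (or an equivalent exact computation) your argument establishes the right order but not the stated constant.
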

\begin{proof}
 First, note that
	\begin{equation*}
		\abs{d_m(\vec{t})}
		\le  \left(\frac{1}{m+1}\sum_{k=0}^{m} \abs{\eip{kt}}\right)^d
		= 1
		= d_m(\vec{0})
	\end{equation*}
	and the point-wise bound follows in the univariate case by
	\begin{equation*}
		\abs{d_m(t)}
		= \frac{1}{m+1} \abs{\frac{\sin(\pi(m+1)t)}{\sin(\pi t)}}
		\le \frac{1}{(m+1)\abs{\sin(\pi t)}}
		\le \frac{1}{2(m+1)\wrap{t}}.
	\end{equation*}
	
	Second, in the multivariate case, setting $t:=\wrapm{\vec{t}}$, and using i) and the univariate bound yield
	\begin{equation*}
		\abs{d_m(\vec{t})}
		= \prod_{\ell=1}^{d} \abs{d_m(\comp{\vec{t}}{\ell})}
		\le \abs{d_m(t)}
		\le \frac{1}{2(m+1)\wrap{t}}
		=\frac{1}{2(m+1)\wrapm{\vec{t}}}.
	\end{equation*}
	
	Note that $\normltm{d_m}^2 = \normlt{d_m}^{2d}$ and therefore, the third assertion is proven for the univariate case as follows. For $m\ge \beta$, Parseval's identity and direct calculation show 
	\begin{align*}
		\normlt{d_m}^2&=\frac{1}{m+1},\quad
		\normlt{d_m^2}^2=\frac{1}{m+1}\left[\frac{2}{3}+\frac{1}{3(m+1)^2}\right]\le \frac{1}{m+1}\cdot\frac{19}{27} \le \frac{1}{m+1}\cdot\frac{1}{\sqrt{2}},\\
		\normlt{d_m^3}^2&=\frac{1}{m+1}\left[\frac{11}{20}+\frac{1}{4(m+1)^2}+\frac{1}{5(m+1)^4}\right]\le\frac{1}{m+1} \cdot \frac{145}{256} \le \frac{1}{m+1}\cdot\frac{1}{\sqrt{3}}.
	\end{align*}
	For $x\in[0,1]$ and $m\ge 4$, the estimates in \cite[Proof of Lemma 2]{MaRo08} yield
	\begin{align*}
		\frac{\sin \pi x}{(m+1)\sin\frac{\pi}{m+1}x}
		&\le \exp\left(-\frac{\pi^2((m+1)^2-1)}{6(m+1)^2}x^2\right)
		\le \exp\left(-\frac{4\pi^2 x^2}{25}\right)
	\end{align*}
	and thus, for $m\ge\beta\ge 4$, the remaining estimate
	\begin{align*}
		\normlt{d_m^\beta}^2
		&= \frac{2}{(m+1)^{2\beta}} \int_{0}^{1/2} \abs{\frac{\sin(\pi(m+1)t)}{\sin(\pi t)}}^{2\beta} \dd t \\
		&= \frac{2}{m+1} \left[ \frac{1}{(m+1)^{2\beta}} \left(\int_{0}^{1} \left(\frac{\sin(\pi x)}{\sin(\frac{\pi}{m+1}x)}\right)^{2\beta} \dd x + \int_{1}^{\frac{m+1}{2}} \abs{\frac{\sin(\pi x)}{\sin(\frac{\pi}{m+1}x)}}^{2\beta}  \dd x\right)\right] \\
		&\le \frac{2}{m+1} \left[\int_{0}^{\infty} \exp\left(-\frac{8\beta\pi^2 x^2}{25}\right) \dd x + \int_{1}^{\infty} \left(\frac{1}{2x}\right)^{2\beta} \dd x\right]\\
		&= \frac{1}{m+1} \left[\frac{5}{2\sqrt{2\pi}} \frac{1}{\sqrt{\beta}}  + \frac{2^{1-2\beta}}{2\beta-1}\right] 
		\le \frac{1}{m+1} \cdot \frac{1}{\sqrt{\beta}}.
	\end{align*}
	
	In order to prove the fourth assertion, note $\wrap{t}\le\wrap{t-t'}+\wrap{t'}\le 2\max\{\wrap{t-t'},\wrap{t'}\}$ and hence, i) and ii) yield
	\begin{equation*}
		\abs{d_m(t')}\abs{d_m(t'-t)} 
		%\le\min\left\{\abs{d_m(t'-t)},\abs{d_m(t')}\right\}
		\le\frac{1}{2(m+1)}\min\left\{\frac{1}{\wrap{t'-t}},\frac{1}{\wrap{t'}}\right\}
		\le \frac{1}{(m+1)\wrap{t}}
	\end{equation*}
	and $\normlinft{d_m d_m(\cdot - t)} \le ((m+1)\wrap{t})^{-1}$.
	Moreover, direct computation gives %(\todo{weg?} we have by $(m+1)d_m$ being the reproducing kernel for $\PP(m)$ or by Parseval's identity and by ii) the estimate)
	\begin{equation*}
		 \abs{\dotprod{d_m}{d_m(\cdot - t)}_{L^2(\T)}}=\frac{\abs{d_m(t)}}{m+1}\le\frac{1}{m+1}\cdot\frac{1}{2}\cdot\frac{1}{(m+1)\wrap{t}}.
	\end{equation*}
	and with $z=\eip{t}$ and Parseval's identity also
	\begin{align*}
	  \abs{\dotprod{d_m^2}{d_m^2(\cdot - t)}_{L^2(\T)}}
	  &=\frac{1}{(m+1)^4}\int_{\T} \left(\frac{\sin\pi(m+1)t'}{\sin\pi t'} \cdot\frac{\sin\pi(m+1)(t'-t)}{\sin\pi (t'-t)}\right)^2 \dd t'\\
	  &=\abs{\sum_{k=-m}^m (m+1-\abs{k})^2 z^k}\\
	  &=\frac{\abs{(z+1)z\left(z^{m+1}-z^{-m-1}\right) +4(m+1)z(1-z)}}{(m+1)^4 \abs{z-1}^3}\\
	  &\le\frac{1}{m+1}\cdot\frac{1}{2}\cdot\frac{1}{(m+1)^2\wrap{t}^2}.
	\end{align*}
	Finally, let $t$ be the coordinate with $\wrap{t}=\wrapm{\vec{t}}$, then the Cauchy--Schwarz inequality, iii), and the above yield
	(noting that $\e^{-2\pi \ii m t'} d_m^2(t')\ge 0$ and omitting the second last line if $\beta=1$)
	\begin{align*}
		\abs{\dotprod{d_m^\beta}{d_m^\beta(\cdot - \vec{t})}_{L^2(\T^d)}}
		&\le \abs{\int_{\T^d} d_m^\beta(\vec{t}') \overline{d_m^\beta(\vec{t}'-\vec{t})} \dd \vec{t}'}\\
		&\le \normlt{d_m^\beta}^{2(d-1)} \abs{\int_{\T} d_m^\beta(t') \overline{d_m^\beta(t'-t)} \dd t'}\\
		&\le \normlt{d_m^\beta}^{2(d-1)} \normlinft{d_m d_m(\cdot-t)}^{\beta-2} \abs{\dotprod{d_m^2}{d_m^2(\cdot - t)}_{L^2(\T)}}\\
		&\le \frac{1}{2(m+1)^d \beta^{(d-1)/2}} \cdot \frac{1}{(m+1)^{\beta} \wrap{t}^{\beta}}.
	\end{align*}

\end{proof}

\begin{lemma}[Lagrange-like basis with decay, cf.~{\cite[v2, Lem.~3]{LiLi17}}]\label{la:lagrangeBasisDecay}
	Let $\beta,d,M,n\in \N$, $\beta$ be even, $\Omega = \set{\vec{t}_1,\hdots,\vec{t}_M} \subset [0,1)^d$ be a clustered node configuration and $n\ge 2\beta^2 \lambda$.
	Then for each $\vec{t}_j\in \Omega$ with $\vec{t}_j \in \Lambda_l$ for some $l=l(j)$, there exists an $I_j \in \PP(n)$, such that 
	\begin{enumerate}
		\item $I_j(\vec{t}_k)=\delta_{j k}$ for all $\vec{t}_k \in \Lambda_l$,
		\item $\abs{I_j(\vec{t})}
		\le  \frac{\beta^{\beta} \lambda^{\beta+\lambda-1}}{(2N\wrapm{\vec{t}-\vec{t}_j})^{\beta}} \CC$ for all $\vec{t}\neq \vec{t}_j$, and
%		\item $\normltm{I_j} 
%		\le \left(\frac{2\lambda\beta}{N}\right)^{d/2}\lambda^{\lambda-1} \CC$.
		\item $\abs{\dotprodltm{I_k}{I_j}} 
		\le \frac{\lambda^d\beta^{d/2}}{N^d}\lambda^{2\lambda-2}\CC^2
		 \begin{cases}
				1,&\vec{t}_k \in \Lambda_l, \\ \frac{\sqrt{\beta}}{2}\left(\frac{\lambda\beta}{N\wrapm{\vec{t}_j-\vec{t}_k}}\right)^{\beta}, &\text{otherwise.}		
\end{cases}$
	\end{enumerate}
\end{lemma}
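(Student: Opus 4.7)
The plan is to build $I_j$ as a Dirichlet-type kernel concentrated at $\vec{t}_j$ multiplied by a Lagrange-like factor that vanishes at the remaining nodes of the same cluster. Specifically, I would set $m := \bfloor{(n-\lambda+1)/\beta}$, so that $m\beta + \lambda - 1 \le n$; the hypothesis $n\ge 2\beta^2\lambda$ ensures $m\ge\beta$ and so Lemma \ref{la:modDirich} applies. For each $\vec{t}_k\in\Lambda_l\setminus\{\vec{t}_j\}$, I would pick a coordinate $\ell(k)\in\{1,\dots,d\}$ attaining $\wrap{(\vec{t}_j-\vec{t}_k)_{\ell(k)}}=\wrapm{\vec{t}_j-\vec{t}_k}$ and define the degree-one trigonometric factor
\begin{equation*}
\phi_{j,k}(\vec{t}) := \frac{\e^{2\pi\ii(\vec{t})_{\ell(k)}}-\e^{2\pi\ii(\vec{t}_k)_{\ell(k)}}}{\e^{2\pi\ii(\vec{t}_j)_{\ell(k)}}-\e^{2\pi\ii(\vec{t}_k)_{\ell(k)}}},
\end{equation*}
and put $I_j(\vec{t}) := d_m^\beta(\vec{t}-\vec{t}_j)\prod_{\vec{t}_k\in\Lambda_l\setminus\{\vec{t}_j\}}\phi_{j,k}(\vec{t})$. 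By construction $I_j\in\PP(m\beta+\lambda-1)\subseteq\PP(n)$, and since $\phi_{j,k}(\vec{t}_k)=0$, $\phi_{j,k}(\vec{t}_j)=1$, and $d_m^\beta(\vec{0})=1$, the interpolation property i) follows immediately.

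For the pointwise estimate ii), Lemma \ref{la:modDirich}(ii) controls $|d_m^\beta(\vec{t}-\vec{t}_j)|$, while (\ref{eq:node_distance}) together with the trivial numerator bound $|\e^{2\pi\ii x}-\e^{2\pi\ii y}|\le 2$ yields $|\phi_{j,k}(\vec{t})|\le(2\wrapm{\vec{t}_j-\vec{t}_k})^{-1}$ uniformly in $\vec{t}$. Multiplying over $k$, extracting $\lambda-1$ factors of $N$, and invoking the definition of $\CC$ produces $\prod_k|\phi_{j,k}(\vec{t})|\le (N/2)^{\lambda-1}\CC$; combining this with the Dirichlet decay and the choice $m+1\asymp N/\beta$ then yields ii) after bookkeeping of the constants, with the $\lambda^{\beta+\lambda-1}$ factor arising from the balance between $m\beta$ and $n$.

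For the inner product bound iii), I would split according to whether $\vec{t}_k$ sits in the same cluster as $\vec{t}_j$. In the same-cluster regime, Cauchy--Schwarz reduces the task to $\normltm{I_j}\normltm{I_k}$; each factor is controlled by pulling out the uniform supremum $(N/2)^{\lambda-1}\CC$ of the Lagrange product and invoking Lemma \ref{la:modDirich}(iii) to bound $\normltm{d_m^\beta}^2$. In the different-cluster regime, however, Cauchy--Schwarz is too lossy, so I would instead expand the product $P_k\overline{P_j}$ (with $P_k:=\prod_{k'}\phi_{k,k'}$) as a finite trigonometric polynomial of max-degree at most $2(\lambda-1)$ and, by translation invariance, reduce each monomial contribution to a shifted Dirichlet pairing of the form $\dotprodltm{d_m^\beta}{d_m^\beta(\cdot-(\vec{t}_j-\vec{t}_k))}$ (up to a harmless modulation). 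Applying Lemma \ref{la:modDirich}(iv) then delivers the decay $\frac{1}{(m+1)^\beta\wrapm{\vec{t}_j-\vec{t}_k}^\beta}$, which with $m+1\asymp N/\beta$ becomes the $(\lambda\beta/(N\wrapm{\vec{t}_j-\vec{t}_k}))^\beta$ factor asserted in the lemma.

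The main technical obstacle is the bookkeeping in iii), in particular preserving the off-diagonal decay through the expansion of $P_k\overline{P_j}$: each of the $O(\lambda)$ modulations by a frequency $\valpha$ with $\|\valpha\|_\infty\le 2(\lambda-1)$ must incur only a polynomial-in-$\lambda$ cost so that the final constant is $\lambda^{O(\lambda)}$ rather than something growing with $N$. The choice of $m$ saturating $m\beta+\lambda-1\le n$, together with the hypothesis $n\ge 2\beta^2\lambda$, is precisely what makes this possible and ensures that the $\lambda^{2\lambda-2}\CC^2$ prefactor in iii) absorbs both the squared Lagrange blow-up and the combinatorial losses from the shift arguments.
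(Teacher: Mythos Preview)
Your construction has a genuine gap: you use degree-one Lagrange factors $\phi_{j,k}$ without the crucial blow-up. With your $\phi_{j,k}$ the best uniform bound is
\[
\prod_{\vec t_k\in\Lambda_l\setminus\{\vec t_j\}}|\phi_{j,k}(\vec t)|\le \prod_k \frac{1}{2\wrapm{\vec t_j-\vec t_k}}
= \Bigl(\frac{N}{2}\Bigr)^{\lambda-1}\prod_k\frac{1}{N\wrapm{\vec t_j-\vec t_k}}\le \Bigl(\frac{N}{2}\Bigr)^{\lambda-1}\CC,
\]
which grows like $N^{\lambda-1}$. Feeding this into your pointwise estimate gives
$|I_j(\vec t)|\lesssim (N/2)^{\lambda-1}\CC\cdot\bigl(\beta/(2N\wrapm{\vec t-\vec t_j})\bigr)^\beta$,
and the $N^{\lambda-1}$ cannot be traded for the $\lambda^{\beta+\lambda-1}$ claimed in ii); your remark that this factor ``arises from the balance between $m\beta$ and $n$'' is not correct. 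The same issue contaminates iii): the same-cluster Cauchy--Schwarz bound then scales like $N^{2\lambda-2}/N^d$, which is not the $N$-independent constant $\lambda^{d+2\lambda-2}$ the lemma asserts. The fix the paper uses is to replace $\phi_{j,k}$ by a factor at frequency $Q=\lfloor n/\lambda\rfloor$,
\[
G_j(\vec t)=\prod_{\vec t_k\in\Lambda_l\setminus\{\vec t_j\}}\frac{\eip{Q(\vec t)_{\ell(k)}}-\eip{Q(\vec t_k)_{\ell(k)}}}{\eip{Q(\vec t_j)_{\ell(k)}}-\eip{Q(\vec t_k)_{\ell(k)}}},
\]
so that each denominator picks up a factor $Q\ge N/(2\lambda)$ via \eqref{eq:node_distance}, giving $\|G_j\|_{L^\infty}\le\lambda^{\lambda-1}\CC$ independently of $N$; the Dirichlet degree is correspondingly lowered to $P=\lfloor n/(\lambda\beta)\rfloor$, which is where the $\lambda^\beta$ in ii) comes from.

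There is a second, smaller gap in your treatment of the off-diagonal part of iii). Expanding $P_k\overline{P_j}$ into monomials does not reduce to Lemma~\ref{la:modDirich}(iv): a modulation $\eip{\valpha\cdot}$ produces $\dotprodltm{d_m^\beta(\cdot-\vec t_k)\,\eip{\valpha\cdot}}{d_m^\beta(\cdot-\vec t_j)}$, a \emph{frequency} shift rather than a spatial shift, which that lemma does not cover, and the coefficients of the expansion again carry the uncontrolled $(N/2)^{\lambda-1}$ size. The paper avoids this entirely by exploiting the hypothesis that $\beta$ is even: then $\e^{-\pi\ii\beta P(\vec t-\vec t_j)}d_P^\beta(\vec t-\vec t_j)\ge 0$, so one can pull out the Lagrange factors in $L^\infty$ and obtain directly $|\dotprodltm{I_k}{I_j}|\le\|G_j\|_{L^\infty}\|G_k\|_{L^\infty}\,|\dotprodltm{H_k}{H_j}|$, after which Lemma~\ref{la:modDirich}(iii),(iv) apply verbatim.
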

\begin{proof}
 We define the functions $I_j$ as product of a Lagrange polynomial $G_j$ within the cluster and a fast decaying function $H_j$.
 Let $j\in\{1,\hdots,M\}$ be fixed and define the $j$-th Lagrange polynomial within its cluster $\Lambda_l$, $l=l(j)$, as follows.
 If $\abs{\Lambda_l}=1$, we simply set $G_j\equiv 1$. Otherwise, let 
 \begin{equation}\label{eq:defQ}
  Q := \floor{\frac{n}{\lambda}}\ge \frac{n-\lambda+1}{\lambda} \ge \frac{N}{2\lambda}
 \end{equation}
 denote the 'blow-up-factor' and for
 $\vec{t}_k\in\Lambda_l\setminus \set{\vec{t}_j}$ let $\ell(k)$ be the index of the vector component that realizes the distance $\wrapm{\vec{t}_j -\vec{t}_k}$.
 We immediately have $\wrapm{Q\vec{t}_j-Q\vec{t}_k}=Q\wrapm{\vec{t}_j-\vec{t}_k}\ne 0$ and thus
 \begin{equation*}
  G_j(\vec{t}) := \prod_{\vec{t}_k\in \Lambda_l\setminus \set{\vec{t}_j}} \frac{\eip{Q\comp{\vec{t}}{\ell(k)} } - \eip{Q \comp{\vec{t}_k}{\ell(k)}}}{\eip{Q\comp{\vec{t}_j}{\ell(k)} } - \eip{Q \comp{\vec{t}_k}{\ell(k)}}}
 \end{equation*}
 fulfills $G_j(\vec{t}_k)=\delta_{j,k}$ and by inequality \eqref{eq:node_distance}
 \begin{equation}\label{eq:GBound}
    \normlinftm{G_j}
    \le \prod_{\vec{t}_k\in \Lambda_l\setminus \set{\vec{t}_j}} \frac{1}{2Q \wrap{\comp{\vec{t}_j}{\ell(k)}-\comp{\vec{t}_k}{\ell(k)}}}
    \le  \lambda^{\lambda-1} \CC.
 \end{equation}
 
 We proceed by setting
 \begin{equation*}
  P:= \floor{\frac{n}{\lambda\beta}},\quad P+1\ge\frac{n-\lambda\beta+1}{\lambda\beta}+1\ge\frac{N}{\lambda\beta},
 \end{equation*}
 and $H_j(\vec{t}):= d_P^\beta(\vec{t}-\vec{t}_j)$.
 Lemma \ref{la:modDirich} yields $H_j(\vec{t}_j)=1$,
 \begin{align*}
 \abs{H_j(\vec{t})} &\le \left(\frac{1}{2(P+1)\wrapm{\vec{t}-\vec{t}_j}}\right)^{\beta}	\le \left(\frac{\lambda\beta}{2N\wrapm{\vec{t}-\vec{t}_j}}\right)^{\beta},\quad &\vec{t} \neq \vec{t}_j,\\
 \abs{\dotprod{H_k}{H_j}_{L^2(\T^d)}} & \le \normltm{d_{P}^\beta}^2 \le \frac{1}{(P+1)^{d}\beta^{d/2}} \le \frac{\lambda^d\beta^{d/2}}{N^d},\quad &\vec{t}_k \in \Lambda_l,\\
 \abs{\dotprod{H_k}{H_j}_{L^2(\T^d)}} &%\le \frac{1}{2(P+1)^d\beta^{(d-1)/2}} \cdot \frac{1}{(P+1)^{\beta} \wrapm{\vec{t}_j-\vec{t}_k}^{\beta}}
  =\abs{\dotprod{d_P^\beta}{d_P^\beta(\cdot-(\vec{t}_j-\vec{t}_k)}_{L^2(\T^d)}}
  \le \frac{\lambda^{d}\beta^{(d+1)/2} (\lambda\beta)^{\beta}} {2N^d(N\wrapm{\vec{t}_j-\vec{t}_k})^{\beta}},\quad &\vec{t}_k \notin \Lambda_l.
  %\frac{\lambda^{d}\beta^{(d+1)/2}}{2N^d} \cdot \frac{ (\lambda\beta)^{\beta}} {(N\wrapm{\vec{t}_j-\vec{t}_k})^{\beta}},\quad &\vec{t}_k \notin \Lambda_l.
 \end{align*}
 
 Finally, we define $I_j(\vec{t}) := G_j(\vec{t}) H_j(\vec{t})$.
 This yields $I_j \in \PP(n)$ since $G_j\in\PP(Q(\lambda-1))$, $H_j \in \PP(P\beta)$, and
 \begin{equation*}
  P\beta + (\lambda-1) Q \le \frac{n}{\lambda}+(\lambda-1)\frac{n}{\lambda}= n.
 \end{equation*} 
 Moreover, this function has the desired property $I_j(\vec{t}_k)=\delta_{jk}$ for all $\vec{t}_k\in \Lambda_l$ and
 the two remaining inequalities follow by $\abs{I_j(\vec{t})} \le \normlinftm{G_j} \abs{H_j(\vec{t})}$
  %\le  \frac{\beta^{\beta} \lambda^{\beta+\lambda-1}}{(N\wrapm{\vec{t}-\vec{t}_j})^{\beta}} \CC, \quad \vec{t}\neq \vec{t}_j,
 and by using $\e^{-\pi \ii \beta P (\vec{t}-\vec{t}_j)} H_j(t) \ge 0$, also $\abs{\dotprod{I_k}{I_j}_{L^2(\T^d)}}\le \norm{G_j}_{L^\infty(\T^d)}^2 \abs{\dotprod{H_k}{H_j}_{L^2(\T^d)}}$.
\end{proof}
\begin{remark}\label{rm:LiLiEstimate}
  Following the calculation in the second version of \cite[p.~36]{LiLi17}, we can improve \eqref{eq:GBound} to
   \begin{equation*}
    \normlinftm{G_j} \le \left(1-\frac{\pi^2}{3\lambda^2}\right)^{\frac{1-\lambda}{2}} \left( \frac{N}{\lambda} / \floor{\frac{n}{\lambda}}\right)^{\lambda-1} \left(\frac{\lambda}{\pi}\right)^{\lambda-1} \CC
                     \le 2.4\left(\frac{C(n) \lambda}{\pi}\right)^{\lambda-1} \CC
   \end{equation*}
   with $C(n)\to 1$ for $n\to\infty$ and where the first two bracketed terms are due to \eqref{eq:node_distance_highO} and \eqref{eq:defQ}, respectively.
\end{remark}

%%%%%%%%%%%%%%%%%%%%%%%%%%%%%%%%%%%%%%%%%%%%%%%%%%%%%%%%%%%%%%%%%%%%%%%%%%%%%%%%%%%%%%%%%%
\section{A lower bound on the smallest singular value}%%%%%%%%%%%%%%%%%%%%%%%%%%%%%%%%%%%%
%%%%%%%%%%%%%%%%%%%%%%%%%%%%%%%%%%%%%%%%%%%%%%%%%%%%%%%%%%%%%%%%%%%%%%%%%%%%%%%%%%%%%%%%%%

In this chapter we work out the multivariate extension of Theorem 1 in the second version of \cite{LiLi17}. Additionally, we do an improvement on the cluster separation condition, especially make the cluster separation independent on the number of nodes $M$. Furthermore, we provide an improved estimate on the smallest singular value $\smin(\mat{A})$ only depending on the biggest cluster size $\lambda$ and not on the number of all nodes $M$.

\begin{thm}\label{thm:lowerBoundSminImproved}
	Let $\beta, d, N, M\in \N$, $\beta \ge d+1$ even, $\Omega = \set{\vec{t}_1,\hdots,\vec{t}_M} \subset [0,1)^d$ be a clustered node configuration and $N> 2\beta^2\lambda$.
	Moreover, assume the cluster separation
	\begin{equation}\label{eq:clusterSepThm}
	\rho
	\ge \lambda \beta\left(\beta^{1/2}2^{d}(2^d-1)\lambda^{\lambda} \zeta(\beta-d+1) \CC\right)^{\tfrac{1}{\beta}}.
	\end{equation}
	Then the smallest singular value of the Vandermonde matrix $\mat{A}\in\C^{M\times N^d}$ from Definition \ref{def:setting} is bounded by
	\begin{equation*}
	\smin(\mat{A}) 
	\ge \left(1.5\cdot \beta^{d/4}\lambda^{\lambda+d/2-1/2}\right)^{-1} \frac{N^{d/2}}{\CC}.
	\end{equation*}
\end{thm}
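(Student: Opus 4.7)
The plan is to apply the robust duality lemma (Lemma~\ref{la:robustDual}) with a trial polynomial assembled from the Lagrange-like basis of Lemma~\ref{la:lagrangeBasisDecay}. Fix a unit vector $\vec{v}\in\C^M$ attaining $\smin(\mat{A})$ and set
\begin{equation*}
 f := \sum_{j=1}^M v_j I_j \in \PP(n).
\end{equation*}
By property i) of Lemma~\ref{la:lagrangeBasisDecay}, $f(\vec{t}_k) = v_k + \epsilon_k$ with the interpolation defect $\epsilon_k := \sum_{j:\,l(j)\ne l(k)} v_j I_j(\vec{t}_k)$ collecting only the contributions from foreign clusters. The proof then reduces to two independent tasks: controlling $\norm{\vec{\epsilon}}_2$ by a universal constant strictly less than one, and bounding $\normltm{f}$ from above in terms of $N$, $\lambda$, $\beta$, and $\CC$.

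To bound the defect, I would write $\vec{\epsilon}=E\vec{v}$ with $E_{kj}:=I_j(\vec{t}_k)$ if $l(j)\ne l(k)$ and zero otherwise. Using $\norm{E}_2^2\le\norm{E}_1\norm{E}_\infty$ and the symmetry of the row- and column-sum estimates, it suffices to control one such sum: for fixed $j$, decompose the sum over $k\notin\Lambda_{l(j)}$ according to the shells $J_m$ of Definition~\ref{def:geom} centred at $\vec{t}_j$. The packing inequality from Remark~\ref{rm:nodeGeometry} gives at most $2^d(2^d-1)m^{d-1}\lambda$ nodes in shell $m$, and the pointwise bound ii) of Lemma~\ref{la:lagrangeBasisDecay}, combined with $N\wrapm{\vec{t}_k-\vec{t}_j}\ge m\rho$, contributes a factor $\beta^\beta\lambda^{\beta+\lambda-1}\CC/(2m\rho)^\beta$. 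Summation in $m$ produces a convergent series whose sum is $\zeta(\beta-d+1)$, finite precisely because $\beta\ge d+1$. Inserting the separation hypothesis~\eqref{eq:clusterSepThm} collapses the row-sum to at most $2^{-\beta}\beta^{-1/2}$, so $\norm{\vec{\epsilon}}_2\le 2^{-\beta}\beta^{-1/2}$, comfortably below $1/2$.

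For the norm of the trial polynomial, expand
\begin{equation*}
 \normltm{f}^2=\sum_{j,k} v_k\overline{v_j}\,\dotprodltm{I_k}{I_j}
\end{equation*}
and bound the associated Gram matrix by its maximum row-sum. The at most $\lambda$ intra-cluster terms contribute at most $\lambda^{2\lambda+d-1}\beta^{d/2}\CC^2 N^{-d}$ directly from iii) of Lemma~\ref{la:lagrangeBasisDecay}. For the inter-cluster terms, the same shell decomposition combined with the extra decay factor $(\sqrt{\beta}/2)(\lambda\beta/(N\wrapm{\vec{t}_j-\vec{t}_k}))^\beta$ and the same $\zeta(\beta-d+1)$ series produces a contribution that, by \eqref{eq:clusterSepThm}, is absorbed into the intra-cluster bound with an extra factor of at most $1/2$. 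Hence $\normltm{f}^2\le 1.5\cdot\lambda^{2\lambda+d-1}\beta^{d/2}\CC^2 N^{-d}$, and Lemma~\ref{la:robustDual} with the defect bound from the previous step yields the claimed inequality after numerical simplification of the constants.

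The main obstacle is bookkeeping: the sharp $1.5$ prefactor, the exact exponents $d/4$ in $\beta$ and $\lambda+d/2-1/2$ in $\lambda$, and the precise form of \eqref{eq:clusterSepThm} appear only if the off-diagonal Gram mass is dominated by half of the diagonal one and simultaneously the defect stays well below $1$. Balancing these demands is dictated by the specific exponent $\beta$ and the prefactor in \eqref{eq:clusterSepThm}, and it rests crucially on the shell-packing count from Remark~\ref{rm:nodeGeometry} to reduce the doubly-indexed sums over nodes to scalar $\zeta$-series. The assumption $\beta\ge d+1$ ensures convergence of those series, while $N>2\beta^2\lambda$ is used only to legitimize the application of Lemma~\ref{la:lagrangeBasisDecay}.
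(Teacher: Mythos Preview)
Your proposal is correct and follows essentially the same route as the paper: both build $f=\sum_j v_j I_j$, bound the interpolation defect via the shell-packing argument and Lemma~\ref{la:lagrangeBasisDecay}~ii) (the paper passes to a symmetric majorant $\tilde K$ and uses $\norm{\cdot}_2\le\norm{\cdot}_\infty$, while you use $\norm{E}_2^2\le\norm{E}_1\norm{E}_\infty$, which yields the identical bound $2^{-\beta}\beta^{-1/2}\le 1/(4\sqrt{2})$), and then control $\normltm{f}^2$ by the Gram-matrix row sum using Lemma~\ref{la:lagrangeBasisDecay}~iii), the same shell decomposition, and \eqref{eq:clusterSepThm} to show the off-diagonal mass is at most $1/2$ of the diagonal one. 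The final constant $(1-1/(4\sqrt{2}))/\sqrt{3/2}\approx 0.672\ge 1/1.5$ then gives the stated bound exactly as in the paper.
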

\begin{proof}
	We apply the robust duality from Lemma \ref{la:robustDual},
	with $\vec{v}\in\C^M$, $\norm{\vec{v}}_2=1$, such that $\smin(\mat{A}) = \norm{\mat{A}^*\vec{v}}_2$, and
	\begin{equation*}
		f:= \sum_{k=1}^{M} v_k I_k,
	\end{equation*}
	where the Lagrange-like basis functions $I_k$ are given by Lemma \ref{la:lagrangeBasisDecay}.
	The interpolation errors $\epsilon_j=f(\vec{t}_j)-v_j$ fulfill $\vec{\epsilon}=\mat{K}\vec{v}$, where $\mat{K}\in\C^{M\times M}$ has the entries
	\begin{equation*}
		K_{j,k}	:= 	\begin{cases}
				0,&\quad j=k,\\
				I_k(t_j),&\quad j\ne k.
			\end{cases}
	\end{equation*}
	We proceed by $\norm{\vec{\epsilon}}_2 \le \norm{\mat{K}}_2\le \norm{\mat{\tilde K}}_2$, where the second inequality follows from monotonicity of the norm \cite[p.~520]{HoJo13} (or \cite[Lem.~A.2]{KuNa18}) and
	Lemma \ref{la:lagrangeBasisDecay} i) and ii) with
	\begin{equation*}
		\tilde K_{j,k} :=
		    \begin{cases}
			0,& \vec{t}_j\in\Lambda_{l(k)},\\
			\frac{\beta^{\beta} \lambda^{\beta+\lambda-1}}{(2N\wrapm{\vec{t}_k-\vec{t}_j})^{\beta}} \CC, & \text{otherwise}.
			\end{cases}
	\end{equation*}
	Since $\mat{\tilde K}\in \R^{M\times M}$ is symmetric, we bound the spectral norm by the maximum norm and apply the packing argument from Definition \ref{def:geom} ii) and Remark \ref{rm:nodeGeometry} ii) to get
	\begin{align*}
		\norm{\vec{\epsilon}}_2
		&\le\max_{j=1,\hdots,M} \sum_{\substack{k=1 \\ k\ne j}}^{M} \tilde K_{j,k}
		\le \lambda 2^d(2^d-1) \sum_{m=1}^{\floor{N/2\rho}} m^{d-1} \max_{\vec{t}\in J_m} \frac{\beta^{\beta} \lambda^{\beta+\lambda-1}}{(2N\wrapm{\vec{t}})^{\beta}} \CC\\
		&\le 2^{d-\beta}(2^d-1) \lambda^{\beta+\lambda} \beta^{\beta}\CC \zeta(\beta-d+1) \rho^{-\beta}.
	\end{align*}
	Condition \eqref{eq:clusterSepThm} and $\beta \ge 2$ imply $\norm{\vec{\epsilon}}_2\le\frac{1}{4\sqrt{2}}$.
	To bound the $L^2$-norm of $f$, let ${\mat{\hat K}} := \pmat{\abs{\dotprod{I_k}{I_j}}}_{j,k=1,\hdots,M} \in \R^{M\times M}$.
	The triangle inequality, symmetry of ${\mat{\hat K}}$, Lemma \ref{la:lagrangeBasisDecay} iii), and the packing argument from Definition \ref{def:geom} ii) and Remark \ref{rm:nodeGeometry} ii) yield
	\begin{align*}
		\normltm{f}^2
		&= \sum_{j=1}^{M}\sum_{k=1}^{M} v_k \conj{v}_j \dotprod{I_k}{I_j}_{L^2(\T^d)}
		\le  \max_{\norm{\vec{w}}_2=1}\vec{w}^* {\mat{\hat K}} \vec{w} 
		\le \norm{{\mat{\hat K}}}_{\infty}
		\le \max_j \sum_{k=1}^{M} \abs{\dotprod{I_k}{I_j}_{L^2(\T^d)}}\\
		&\le  \frac{\lambda^d\beta^{d/2}}{N^d}\lambda^{2\lambda-2} \CC^2 \left(\lambda+\lambda 2^d(2^d-1) \sum_{m=1}^{\floor{N/2\rho}} m^{d-1} \max_{\vec{t}\in J_m} \frac{\sqrt{\beta}}{2}\frac{(\lambda\beta)^{\beta}}{(N\wrapm{\vec{t}})^{\beta}}\right) \\
		&\le \frac{\lambda^d\beta^{d/2}}{N^d}\lambda^{2\lambda-1}\CC^2 \left(1+\lambda^{\beta}\beta^{\beta+1/2} 2^{d-1}(2^d-1) \zeta(\beta-d+1)\rho^{-\beta} \right).
	\end{align*}
	Condition \eqref{eq:clusterSepThm} implies	
	\begin{equation*}
		\normltm{f}
		\le \sqrt{\frac{3}{2}}\left(\frac{\lambda\sqrt{\beta}}{N}\right)^{d/2} \lambda^{\lambda-1/2}\CC
	\end{equation*}
	and Lemma \ref{la:robustDual} finally the result.
\end{proof}

For $d =1$, Remark \ref{rm:nodeGeometry} iii) applied to the cluster complexity yields:
\begin{corollary}\label{cor:Ctau}
	Under the assumptions of Theorem \ref{thm:lowerBoundSminImproved} with $d=1$ and $\beta=2$,
%	\begin{equation*}
%		\rho \ge \lambda\beta(\beta^{1/2}2^\lambda\e^\lambda \tau ^{1-\lambda})^{1/\beta},
%	\end{equation*}
	we have
	\begin{equation*}
		\smin(\mat{A}) 
		\ge \frac{1}{1.8(2\e)^{\lambda-1}}\cdot \sqrt{N} \tau^{\lambda-1}.
	\end{equation*}
\end{corollary}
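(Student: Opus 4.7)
The plan is to specialize Theorem \ref{thm:lowerBoundSminImproved} to $d=1$, $\beta=2$, and then eliminate the cluster complexity $\CC$ in favor of the normalized minimal separation $\tau$ by invoking the bound \eqref{eq:CupperBound} from Remark \ref{rm:nodeGeometry} iii). Since Corollary \ref{cor:Ctau} already imports the hypotheses of the theorem, no additional work on the separation condition \eqref{eq:clusterSepThm} is required; the computation reduces to book-keeping of constants.

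More precisely, I would first substitute $d=1$ and $\beta=2$ directly into the conclusion of Theorem \ref{thm:lowerBoundSminImproved}, which gives
\begin{equation*}
\smin(\mat{A}) \ge \frac{1}{1.5 \cdot 2^{1/4}\,\lambda^{\lambda}}\cdot\frac{\sqrt{N}}{\CC}.
\end{equation*}
Next, I would apply the first inequality of \eqref{eq:CupperBound}, which in particular yields $\CC \le (2\e)^{\lambda-1}/(\lambda^{\lambda}\tau^{\lambda-1})$; inverting and substituting turns the factor $\lambda^{\lambda}/\CC$ into $\lambda^{2\lambda}\tau^{\lambda-1}/(2\e)^{\lambda-1}$. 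Wait — more carefully, $1/\CC \ge \lambda^{\lambda}\tau^{\lambda-1}/(2\e)^{\lambda-1}$, so combining with the prefactor $1/(1.5\cdot 2^{1/4}\lambda^{\lambda})$ the two $\lambda^{\lambda}$ terms cancel and we are left with
\begin{equation*}
\smin(\mat{A}) \ge \frac{1}{1.5 \cdot 2^{1/4}\,(2\e)^{\lambda-1}}\cdot \sqrt{N}\,\tau^{\lambda-1}.
\end{equation*}
Finally, since $1.5 \cdot 2^{1/4} = 1.5\cdot 1.1892\ldots < 1.8$, the stated constant $1.8$ is obtained.

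The only genuine step worth highlighting — and essentially the sole content beyond substitution — is recognizing that the bound on $\CC$ in terms of $\tau$ from Remark \ref{rm:nodeGeometry} iii) is used in the direction $\CC \le (2\e)^{\lambda-1}\lambda^{-\lambda}\tau^{1-\lambda}$, and that the factor $\lambda^{\lambda}$ arising from this inequality exactly cancels the $\lambda^{\lambda}$ in the denominator of the theorem's bound, so that no $\lambda$-dependence beyond the exponential $(2\e)^{\lambda-1}$ survives. There is no real obstacle; the proof amounts to two lines plus a numerical check on the constant.
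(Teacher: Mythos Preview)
Your proposal is correct and follows exactly the approach the paper indicates: specialize Theorem \ref{thm:lowerBoundSminImproved} to $d=1$, $\beta=2$, then apply the bound $\CC\le (2\e)^{\lambda-1}\lambda^{-\lambda}\tau^{1-\lambda}$ from Remark \ref{rm:nodeGeometry} iii), observing that the $\lambda^{\lambda}$ factors cancel and $1.5\cdot 2^{1/4}<1.8$. The paper states this corollary with only the one-line justification ``Remark \ref{rm:nodeGeometry} iii) applied to the cluster complexity,'' which is precisely what you carry out.
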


\begin{example}[Specific choices of $\beta$]\label{ex:specificBeta}
 Specific choices of $\beta$ in Theorem \ref{thm:lowerBoundSminImproved} yield the following:
	\begin{enumerate}
	 \item By choosing $\beta=d+1$ or $\beta=d+2$ for $d$ being odd or even, respectively, and some additional cosmetics, the condition
	  \begin{equation*}
		\rho \ge 6 d \lambda \left({\lambda^{\lambda}\CC}\right)^{\frac{1}{d+1}}
	  \end{equation*}
	  implies \emph{our best estimate}
	  \begin{equation*}
		\smin(\mat{A})
		\ge\left(3 d^{d/4} \lambda^{\lambda+d/2-1/2}\right)^{-1} \frac{N^{d/2}}{\CC}.
	  \end{equation*}
	\item By choosing $\beta =2\ceil{\frac{1}{2}\log\left(2^{d}(2^d-1)\lambda^{\lambda} \zeta(2) \CC\right)}$ and noting that $\sqrt[2\beta]{\beta}\le 1.2$ for $\beta$ even and $\sqrt[\log C]{C}=\e$, 
	\emph{our weakest condition}
	  \begin{equation*}
		\rho 
		\ge 3.3 \lambda \left(2.5+1.4 d + \lambda\log\lambda+\log\CC\right),
	  \end{equation*}
	  implies
	  \begin{equation*}
		\smin(\mat{A}) 
		\ge  \left(1.5 \cdot \left(2.5+1.4 d + \lambda\log{\lambda} + \log \CC\right)^{d/4}\lambda^{\lambda+d/2-1/2}\right)^{-1} \frac{N^{d/2}}{\CC}.
	  \end{equation*}
	\end{enumerate}
\end{example}

\begin{example}[Well separated nodes]\label{ex:wellSeparated}
 For $\lambda=1$, we have $\CC=1$ and the nodes are well separated.
 For $\rho \ge 6d$, Example \ref{ex:specificBeta} i) yields
 \begin{equation*}
	\smin(\mat{A}) \ge \frac{N^{d/2}}{3d^{d/4}}.
 \end{equation*}
 Note that Theorem \ref{thm:lowerBoundSminImproved} always assumes $\rho\ge\beta\ge d+1$.
 This compares to \cite{KuMoPeOh17}, where $\rho\ge 3+2\log d$ already suffices for $\smin(\mat{A})>0$.
 Using Theorem \ref{thm:lowerBoundSminImproved} directly for $d=1$ and $\beta=2$, then $\rho \ge 4.4$ implies
 \begin{equation*}
   \smin(\mat{A}) \ge \frac{\sqrt{N}}{1.8}.
 \end{equation*}
This compares to \cite{AuBo17,Mo15}, which provide under the same condition on $\rho$, $\smin(\mat{A}) \ge \sqrt{N}\cdot\sqrt{1-1/\rho}\ge  \sqrt{N}/1.14$.
\end{example}

\begin{example}[Pair clusters]\label{ex:pairs}
 For $\lambda=2$, we have $\CC=1/\tau$ and at most pairs of nodes form clusters.
 Example \ref{ex:specificBeta} i) with 
 \begin{equation*}
  \rho \ge 12 d \left(\frac{4}{\tau}\right)^{\frac{1}{d+1}}
 \end{equation*}
 implies
  \begin{equation*}
    \smin(\mat{A})\ge\frac{\tau N^{d/2}}{12\cdot2^{d/2-1/2}\cdot d^{d/4}}.
  \end{equation*}
\end{example}

\begin{example}[Pair clusters, comparison]\label{ex:pairsComparison}
 Let $d=1$ and $\lambda=2$. We apply Theorem \ref{thm:lowerBoundSminImproved} with
 $\beta=2$, $\beta =2\ceil{\frac{1}{2}\log\left(\frac{\pi^2}{3} \lambda^{\lambda} \CC\right)}$ and $\beta=2\lambda$, respectively.
 These results are compared
 to \cite[Thm.~1]{LiLi17} (with minor corrections and where we simplified slightly $\floor{{n}/{\lambda}}\approx {n}/{\lambda}$),
 to \cite[Thm.~4.9]{KuNa18} (under the additional assumption that all nodes inside the clusters have the same separation), and
 to \cite[Cor.~4.2]{Di19} (with a minor improvement for $\tau \le 1$ and in estimating \cite[Eq.~(8)]{Di19}).
 \begin{table}[h!]
 \centering
  \begin{tabular}{|c|c|c|c|c|c|c|}
  \hline
   Ref. & \multicolumn{3}{c|}{Thm.~\ref{thm:lowerBoundSminImproved}} & \cite{LiLi17} & \cite{KuNa18} & \cite{Di19}\\[2ex]
   \hline
   $\rho\ge$ & $\frac{17.3}{\sqrt{\tau}}$ & $34.9+6.6\abs{\log\tau}$ & $\frac{29}{\sqrt[4]{\tau}}$ & $\frac{42.5\sqrt[4]{M}}{\sqrt[4]{\tau}}$ & $25(\log(\floor{\frac{M}{4}})+1)$ & $3$\\[2ex]
   \hline
   $\smin(\mat{A}) \ge$ & $\frac{\tau\sqrt{N}}{7.2}$ & $\frac{\tau \sqrt{N}}{6 \sqrt[4]{5.3 + \abs{\log\tau}}}$ & $\frac{\tau\sqrt{N}}{8.6}$ & $\frac{\tau\sqrt{N}}{4.5 \sqrt{M}}$
    & $\frac{\tau \sqrt{N}}{3.5}$ & $\frac{\tau\sqrt{N}}{1.7}$\\
    \hline
  \end{tabular}
 \end{table}
 These comparisons are also presented in section \ref{sec:pairClusters} numerically.
\end{example}

\begin{example}(Comparison with \cite{LiLi17})\label{ex:comparisonWithLiLi}
	Let $d=1$ and $\beta=2\lambda$, then $N > 2\lambda^3$ and
	 $\rho \ge 4.4\lambda^{5/2}\CC^{\frac{1}{2\lambda}}$
	 imply
	 \begin{equation*}
		\smin(\mat{A})
		\ge\left(1.8 \cdot C_0^{\lambda-1} \cdot \lambda^{\lambda+1/4}\right)^{-1} \frac{\sqrt{N}}{\CC},
	 \end{equation*}
	 where we set $C_0=1$ for the moment.
	  This can be compared to \cite[Thm.~1]{LiLi17}, where after minor corrections $N> 2\lambda^2$ and
	  $	\rho 
		\ge 10 \lambda^{5/2} (M\CC)^{\frac{1}{2\lambda}},
	  $ imply
	 \begin{equation*}
		\smin(\mat{A}) 
		\ge \left(1.5\cdot C_0^{\lambda-1} \cdot \sqrt{M} \lambda^{\lambda}\right)^{-1} \frac{\sqrt{N}}{\CC}.
	 \end{equation*}
	 According to Remark \ref{rm:LiLiEstimate}, $C_0 \in \intercr{\pi^{-1},1}$ depending on $\lambda$ and $n$.
	 %by estimating $\abs{z-z'}\approx 2\pi\wrap{t-t'}$ in Definition \ref{def:geom} and inequality \eqref{eq:GBound} (cf.~Remark \ref{rm:LiLiEstimate}), and using $\floor{{n}/{\lambda}}\approx {n}/{\lambda}$ in the proof of Lemma \ref{la:lagrangeBasisDecay}.
	 In total, we have a stronger condition on $N$ but our condition on $\rho$ is always weaker and our estimate on $\smin(\mat{A})$ is sharper if $M>2$. 
	 This comparison is also presented in Figure \ref{fig:vglLiLi17}.
\end{example}

\begin{example}[All nodes cluster]
	Let $d=1$ and $\lambda=M$. If $N > 8M$, then Corollary \ref{cor:Ctau} implies
	\begin{equation*}
		\smin(\mat{A}) 
		\ge \frac{1}{1.8(2\e)^{M-1}}\cdot \sqrt{N} \tau^{M-1}.
	\end{equation*}
	This compares to \cite{BaDeGoYo18}, where the restriction of the nodes to an interval of length $1/(2M^2)$ and $N\ge 4M^3$ imply
	\begin{equation*}
		\smin(\mat{A}) 
		\ge \frac{1}{2^MM^{2M-1}}\cdot \sqrt{N} \tau^{M-1},
	\end{equation*}
	but, note that the definition of a clustered node configuration in \cite{BaDeGoYo18} is in principle more flexible than ours.
\end{example}

%%%%%%%%%%%%%%%%%%%%%%%%%%%%%%%%%%%%%%%%%%%%%%%%%%%%%%%%%%%%%%%%%%%%%%%%%%%%%%%%%%%%%%%%%%
\section{Upper bounds and beyond distances} %%%%%%%%%%%%%%%%%%%%%%%%%%%%%%%%%%%%%%%%%%%%%%
%%%%%%%%%%%%%%%%%%%%%%%%%%%%%%%%%%%%%%%%%%%%%%%%%%%%%%%%%%%%%%%%%%%%%%%%%%%%%%%%%%%%%%%%%%
\label{sec:uBounds_beyondDistances}
In this section, we show that the obtained lower bounds are sharp for $d=1$ and for $\lambda=2$, respectively. 
Moreover, we show for $d>1$ and nodes in generic position (e.g.~not all nodes on a line for $d=2$), that the cluster complexity $C$ is not the optimal quantity to understand the situation here.
If we assume a normalized minimal separation $\tau$ between nodes, then the estimate in Theorem \ref{thm:lowerBoundSminImproved} is sub-optimal with respect to the order in $\tau$ we can derive from the cluster complexity.
For this, we give an example with one cluster of three nodes in the bivariate case, $d=2$.

\begin{example}[Matching bounds for $d=1$]
 	In the second version of \cite[Prop.~3]{LiLi17} an upper bound on $\smin(\mat{A})$ is given for a clustered node configuration that consists of at least one cluster of $\lambda$ equispaced, $\tau$ separated nodes. After further simplifications, we can derive\\
 	\begin{equation*}
	 	\min_{\Omega}\smin(\mat{A}) 
	 	\le (\pi\lambda)^{1/4} \pi^{\lambda-1} \sqrt{N} \tau^{\lambda-1}(1+\tau C(\lambda)\sqrt{N}), \quad C(\lambda):= 2\pi \sum_{l=0}^{\lambda} \nchoosek{\lambda-1}{l}\frac{l^\lambda}{\lambda!}.
 	\end{equation*}
 	Together with Remark \ref{rm:LiLiEstimate} and Corollary \ref{cor:Ctau} this assures that for sufficiently large $N\in\N$, small $\tau$ and $\lambda\ge 2$, there exist constants $ c_1\le c_2$ such that
 	\begin{equation*}
 	  \sqrt{N} \left(c_1 \tau\right)^{\lambda-1} \le \min_{\Omega}\smin(\mat{A}) \le \sqrt{N} \left(c_2 \tau\right)^{\lambda-1},
 	\end{equation*}
 	where the minimum is taken over all clustered node configurations $\Omega$ with at least one cluster of $\lambda$ nodes with normalized minimal separation $\tau$.
 	
	This was also expected in \cite[Rem.~3.5]{BaDeGoYo18}. In particular note that the lower bound in Remark \ref{rm:nodeGeometry} iii) implies that the term $\lambda^{\lambda}$ in Theorem \ref{thm:lowerBoundSminImproved} cannot be avoided.
\end{example}

\begin{example}[Matching bounds for $\lambda=2$]\label{ex:pairCluster_multivariate}
 Let $d\in\N$, $\lambda\ge 2$, and $\tau\le1$ be such that $\wrapm{\vec{t}_1-\vec{t}_2}=\tau/N$, then the Cauchy interlacing theorem for eigenvalues (\cite[Thm.~4.3.28]{HoJo13}) and the binomial formula yield
 \begin{align*}
  \smin(\mat{A})^2
  &\le N^d\left(1 - \abs{d_n\left(\vec{t}_1-\vec{t}_2\right)}\right)
  \le N^d\left(1 - \left(1-\frac{\pi^2\tau^2}{6}\right)^d\right)\\
  &\le N^d \frac{\pi^2\tau^2}{6} \sum_{k=0}^{d-1} \left(1-\frac{\pi^2\tau^2}{6}\right)^k
  \le \frac{\pi^2   \tau^2 dN^d}{6}.
 \end{align*}
 Together with Example \ref{ex:pairs}, there exists constants $c_1(d)\le c_2(d)$ such that
 \begin{equation*}
  c_1(d) N^{d/2}\tau\le\min_{\Omega} \smin(\mat{A})\le c_2(d) N^{d/2} \tau,
 \end{equation*}
 where the minimum is taken over all clustered node configurations $\Omega$ with at least one cluster of $\lambda=2$ nodes with normalized minimal separation $\tau$.
\end{example}

\begin{example}[Triple cluster]\label{ex:tripleCluster}
	Let $d=2$, $N\in\N$, and $\Omega=\set{\vec{t}_1, \vec{t}_2, \vec{t}_3}\subset \intercl{0,1}^2$ with
	\begin{equation*}
	 \vec{t}_1=\pmat{0\\0},\; \vec{t}_2=\frac{\nu}{N} \pmat{a_1\\a_2},\; \vec{t}_3=\frac{\nu}{N} \pmat{b_1\\b_2},\quad
	 \nu\in\left(0,\frac{1}{2}\right],\;a_1^2+a_2^2=b_1^2+b_2^2=1,\; a_1b_1+a_2b_2\le 0.
	\end{equation*}
	and hence, the normalized minimal separation of $\Omega$ is $\nu/\sqrt{2}\le \tau \le \nu$.
         %Let $z_j=\eim{t_j}\in \T^2, j=1\dots,3$ be the associated nodes. We assume that the nodes do not lie on a line.
        Then the smallest singular value of the corresponding Vandermonde matrix $\mat{A}$ fulfills
	\begin{equation*}
	 \smin(\mat{A}) = \begin{cases}
	                   \Theta(\tau^2), & \text{antipodal nodes},\; a_1 b_1 + a_2 b_2=-1,\\
	                   \Theta(\tau), & \text{otherwise},
	                  \end{cases}
	\end{equation*}
	and this can be seen as follows:
	Define the real matrix
	\begin{equation*}
		\mat{M} :=\pmat{1 & u & v \\ u & 1 & w \\ v & w &1}
		:= \left(\e^{-\pi\ii m (\vec{t}_j-\vec{t}_k)}d_m(\vec{t}_j-\vec{t}_k)\right)_{j,k=1,2,3},
	\end{equation*}
	note that $\smin(\mat{A})^2 =\smin(\mat{A}\mat{A}^*)=\smin(\mat{M})= \norm{\mat{M}^{-1}}_2^{-1}$, and use the explicit formula
	\begin{equation}\label{eq:KInvFormula}
		\norm{\mat{M}^{-1}}_2 
		= \frac{1}{\abs{u^2+v^2+w^2 -2uvw -1}} \norm{\pmat{w^2-1 & u-vw & v-uw \\ u-vw & v^2-1 & w-uv\\ v-uw & w-uv & u^2-1}}_2.
	\end{equation}
	The univariate Taylor expansion
	\begin{equation*}
	 \e^{-\pi\ii n \nu/N} d_n\left(\frac{\nu}{N}\right)= 1- \alpha_n \nu^2 + \gamma_n \nu^4 + \OO{\nu^6},\quad \alpha_n, \gamma_n \ne 0,
	\end{equation*}
	and $a_1^2+a_2^2=1=b_1^2+b_2^2$ yield
	\begin{equation*}
	  u = 1-\alpha_n \nu^2 + ( \alpha_n^2 a_1^2a_2^2  + \gamma_n (a_1^4+a_2^4) ) \nu^4 +\OO{\nu^6}
	\end{equation*}
	and similar expressions for the other quantities.
	By direct computation, we see that the entries in the matrix on the right hand side of \eqref{eq:KInvFormula} are all $\OO{\nu^2}$ and for example the diagonal entry $u^2-1$ is $\Theta(\nu^2)$ independent of $\vec{a}$ and $\vec{b}$.
	Hence, the norm of that matrix is $\Theta(\nu^2)$.
	Similarly, the denominator of \eqref{eq:KInvFormula} can be computed to be
	\begin{equation*}
		u^2+v^2+w^2 -2uvw -1 =
		\begin{cases}
			\OO{\nu^6}, & a_1 b_1 + a_2 b_2=-1,\\
			\Theta(\nu^4), & \text{otherwise}.
		\end{cases}
	\end{equation*}
	Finally, this yields
	\begin{equation*}
		\smin(\mat{A})=
		\begin{cases}
			\OO{\nu^2}, & a_1 b_1 + a_2 b_2=-1,\\
			\Theta(\nu), & \text{otherwise},
		\end{cases}
	\end{equation*}
	and together with Theorem \ref{thm:lowerBoundSminImproved} the assertion.
\end{example}

%%%%%%%%%%%%%%%%%%%%%%%%%%%%%%%%%%%%%%%%%%%%%%%%%%%%%%%%%%%%%%%%%%%%%%%%%%%%%%%%%%%%%%%%%%
\section{Numerics}%%%%%%%%%%%%%%%%%%%%%%%%%%%%%%%%%%%%%%%%%%%%%%%%%%%%%%%%%%%%%%%%%%%%%%%%
%%%%%%%%%%%%%%%%%%%%%%%%%%%%%%%%%%%%%%%%%%%%%%%%%%%%%%%%%%%%%%%%%%%%%%%%%%%%%%%%%%%%%%%%%%

 In this section we do four different experiments. Two of them are to compare our results with recent results from the literature ($d=1$) and two of them underline our results from section \ref{sec:uBounds_beyondDistances}. All computations were carried out using MATLAB R2017b.
%As a test to confirm our results from Theorem \ref{thm:lowerBoundSminImproved} numerically, we do 3 experiments, each for the dimensions $d=1,2,3$.

%In order to compare our results with \cite[Cor.~1.1]{BaDeGoYo18} and \cite{LiLi17} we use the following configuration. The polynomial degree is set to $N=2^{15}+1$ to make sure that conditions on $N$ in all results are satisfied. We set $d=1$ and take $M=6$ nodes building two clusters of $3$ equispaced nodes each. Since \cite[Cor.~1.1]{BaDeGoYo18} requires the nodes to be in an arc of length $\frac{1}{2M^2}=\frac{1}{72}$, we place the two clusters maximally separated inside it, i.e.~$\Omega=\set{0,\frac{\tau}{N}\frac{2\tau}{N},\frac{1}{2M^2}-\frac{2\tau}{N},\frac{1}{2M^2}-\frac{1\tau}{N},\frac{1}{2M^2}}$. Therefore, for this node configuration $\rho = N(\frac{1}{2M^2}-\frac{4\tau}{N})$.
%
%\begin{figure}[h]
%	\begin{center}
%		\includegraphics[scale=0.8]{images/paper2_BaDeGoYo18_vglLiLi17.eps}
%	\end{center}
%	\caption{}
%	\label{fig:vglBaDeGoYo18LiLi17}
%\end{figure}

 \subsection{Pair clusters}\label{sec:pairClusters}

In order to compare our results (see Example \ref{ex:pairsComparison}) with the ones from the second version of \cite[Thm.~2]{LiLi17}, \cite{Di19} and \cite{KuNa18}, we set $d=1$, $N=2^{15}+1$ (\cite{KuNa18} requires odd $N$ without further considerations), and take $M=4$ and $M=20$ nodes, respectively. The node configuration consists of uniformly placed clusters (at $l/N$, $l=1,\dots,M/2$) that include two nodes each. The first cluster realizes the minimal separation $\tau$, which is picked logarithmically uniformly at random from $[10^{-12},1]$, i.e.~$t_1=0$ and $t_2=\tau/N$. The further clusters have nodes $t_{2l}=l/N$ and $t_{2l+1}=(l+\delta)/N$ for $l=1,\dots,(M-1)/2$, where $\delta\in [\tau,2\tau]$ (parameter $c=2$ in \cite[Thm.~4.7]{KuNa18}) is picked uniformly randomly. Afterwards, we compute $\smin(\mat{A})$, where $\mat{A}$ is the Vandermonde matrix defined in \eqref{eq:defA} corresponding to the node configuration. For each $M$ we pick $50$ instances of $\tau$ and the results are presented in Figure \ref{fig:vglLiLi17_Di19_KuNa18}. 
\begin{figure}[h]
	\begin{center}
		\includegraphics[width=0.8\textwidth]{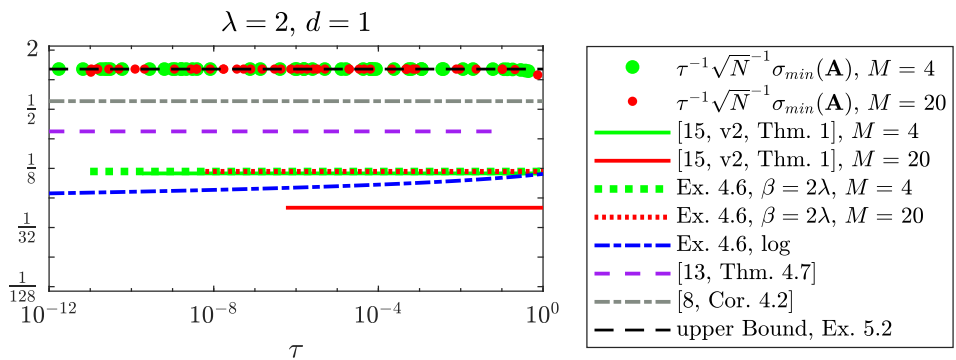}
	\end{center}
	\caption{Comparison of different results for the case of pair clusters as in Example \ref{ex:pairsComparison}.}
	\label{fig:vglLiLi17_Di19_KuNa18}
\end{figure}
This clustered node configuration fulfills $\rho\ge \frac{2N}{M}-1$ independently of $\tau$. Theorem \ref{thm:lowerBoundSminImproved} and the second version of \cite[Thm.~1]{LiLi17} make restrictions to $\tau$ through the condition on $\rho$. Therefore, choosing $\beta$ logarithmically as in Example \ref{ex:specificBeta} ii) requires $\tau \ge \e^{-\frac{35.9-2N/M}{6.6}}$,
which is below $10^{-200}$ for both $M=4$ and $M=20$. The second version of \cite{LiLi17} and our result with $\beta=4$ requires respectively\setlength{\columnsep}{-1cm}\vspace{-0.8cm}
\begin{multicols}{2}
\begin{equation*}
	\tau \ge \frac{43^4 M}{\rho^4}
	\approx \begin{cases}
		1.9\cdot 10^{-10},\quad &M=4,\\
		5.9 \cdot 10^{-7},\quad &M=20,
\end{cases}
\end{equation*}
\break
\begin{equation*}
	\tau \ge \frac{29^4}{\rho^4}
	\approx \begin{cases}
		9.8\cdot 10^{-12},\quad &M=4,\\
		6.1 \cdot 10^{-9},\quad &M=20.\\
	\end{cases}
\end{equation*}
\end{multicols}

\subsection{Bigger clusters}

In this numerical example, we confirm our results in the univariate case, $d=1$, for bigger clusters of size $\lambda = 5$ and compare them with the results from the second version of \cite{LiLi17}. The polynomial degree is set to $N=2^{15}$. We build up clustered node configurations with $L=2$ $(M=10)$ and $L=10$ $(M=50)$ clusters placed equispaced at $\frac{l}{L}$ for $l=0,\dots,L-1$. At each cluster position the cluster nodes start to lie equispaced with separation $\frac{\tau}{N}$, where $\tau \in [10^{-4},1/4]$ (the right hand interval bound is due to cluster lying in an interval of length $1/N$) is picked logarithmically uniformly at random. Afterwards the smallest singular value $\smin(\mat{A})$ is computed. This procedure is repeated 100 times for the respective choice of $L$ and the results are presented in Figure \ref{fig:vglLiLi17}. We use the statements from Example \ref{ex:comparisonWithLiLi} with $C_0 = (1-\frac{\pi^2}{3\lambda^2})^{-1/2}N/\lambda \floor{n/\lambda}^{-1}$. Since $d=1$, the worst case cluster complexity is estimated by \eqref{eq:CupperBound} to $\CC\le \tau^{-4}/4$.
%%Since a cluster is defined to c, we have $\tau \le 1/4$ here. The left interval boundary is chosen to circumvent numerical instabilities for calculating $\smin(\mat{A})$.
%We see that the constant $C_0$ becomes more important for node configurations with lager clusters. Furthermore, at this scale the calculated values seem to be independent from the number of involved nodes $M$ which is captured by our bounds on $\smin(\mat{A})$. 
%An additional comparison with results from \cite{BaDeGoYo18} is omitted here since the involved constant in the bound for $\smin(\mat{A})$ is $(2^M\pi^{M-\lambda}M^{2M-1})^{-1}$, which is below machine precision for $M\ge 8$.

\begin{figure}[h!]
	\centering
		\includegraphics[width=0.8\textwidth]{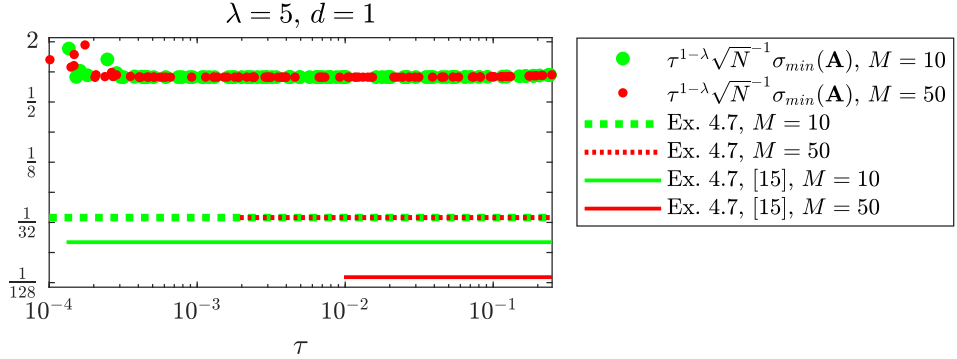}
	\caption{Node configurations with bigger clusters. Lower bounds on $\smin(\mat{A})$. Comparison with \cite[v2, Thm.~1]{LiLi17} as presented in Example \ref{ex:comparisonWithLiLi} with estimate from Remark \ref{rm:LiLiEstimate}.}
	\label{fig:vglLiLi17}
\end{figure}

\subsection{Pair clusters, bivariate}

We present a numerical experiment in order to confirm our results for the higher dimensional case and set $d=2$. Randomized clustered node configurations of $L=2$, $L=20$ and $L=40$ clusters with $2$ nodes each are constructed for $100$ different minimal separations $\tau$, respectively. Then the smallest singular values of the corresponding Vandermonde matrices $\smin(\mat{A})$ are computed and the upper bound from Example \ref{ex:pairCluster_multivariate} and the lower bound from Example \ref{ex:pairs} are shown. The results are presented in Figure \ref{fig:d2Lambda2}.
The node configurations are built as follows. The minimal separation $\tau$ is picked logarithmically uniformly at random in $[10^{-3},1]$. We set $N=10^3$ so that the condition on $\rho$ in Example \ref{ex:pairs} together with the left interval bound for $\tau$ make $\rho \ge \rho_{min}$ (value shown in the figure) necessary. Two clusters realize the cluster separation $\rho_{min}$ and for the remaining clusters, we pick a position in $[0,1]^2$ uniformly at random. The positions are fixed for the respective choice of $L$ and do not change for different $\tau$. Each cluster is constructed randomly by setting one node to $(0,0)$ and one to either $(a,1)$ or $(1,a)$ for some $a\in \interc{0,1}$. Then we scale the clusters by $\tau$ and move them to their respective cluster positions. 
\begin{figure}[h!]
	\centering
		\includegraphics[width=0.8\textwidth]{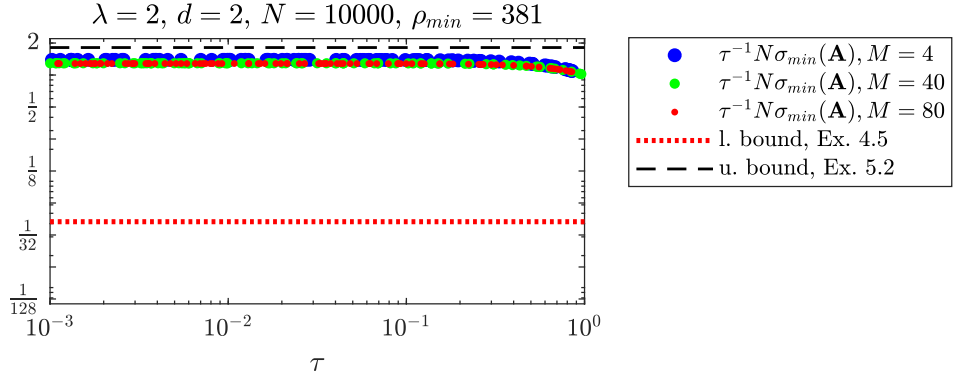}
	\caption{Upper and lower bounds on $\smin(\mat{A})$ for bivariate pair clusters as in Examples \ref{ex:pairs} and \ref{ex:pairCluster_multivariate}.}
	\label{fig:d2Lambda2}
\end{figure}

\subsection{One triple cluster, bivariate}
Here we present a numerical experiment for Example \ref{ex:tripleCluster}.
We set $N=100$, $d=2$ and build the triple cluster consisting of the nodes $\vec{t}_1=(0,0)^T$, $\vec{t}_2=(-\sqrt{1-a^2}\nu/N,a\nu/N)^T$ and $\vec{t}_3=(\nu/N,0)^T$ (see Figure \ref{fig:tripleCluster}, left), where $\tau=\nu\sqrt{1-a^2} \in [10^{-6},1/2]$ is picked logarithmically uniformly at random. Then we compute the smallest singular value of the Vandermonde matrix $\smin(\mat{A})$. This is repeated $100$ times for $a=0.1$ and $a=0$ each. The results are presented in Figure \ref{fig:tripleCluster} (right). We see the asymptotic behavior with respect to $\tau$ calculated in Example \ref{ex:tripleCluster}. Furthermore, for nodes not being antipodal, we observe that the asymptotic starts when $\tau$ becomes smaller than the displacement parameter $a$.
\begin{figure}[h!]
 \centering
 \begin{subfigure}{0.49\textwidth}
 \hfill
		\includegraphics[width=0.8\linewidth]{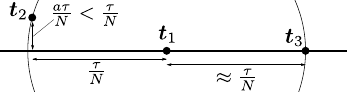}
	\label{fig:tripleCluster_antipodal}
 \end{subfigure}	
 \hfill
 \begin{subfigure}{0.49\textwidth}
	\includegraphics[width=0.8\linewidth]{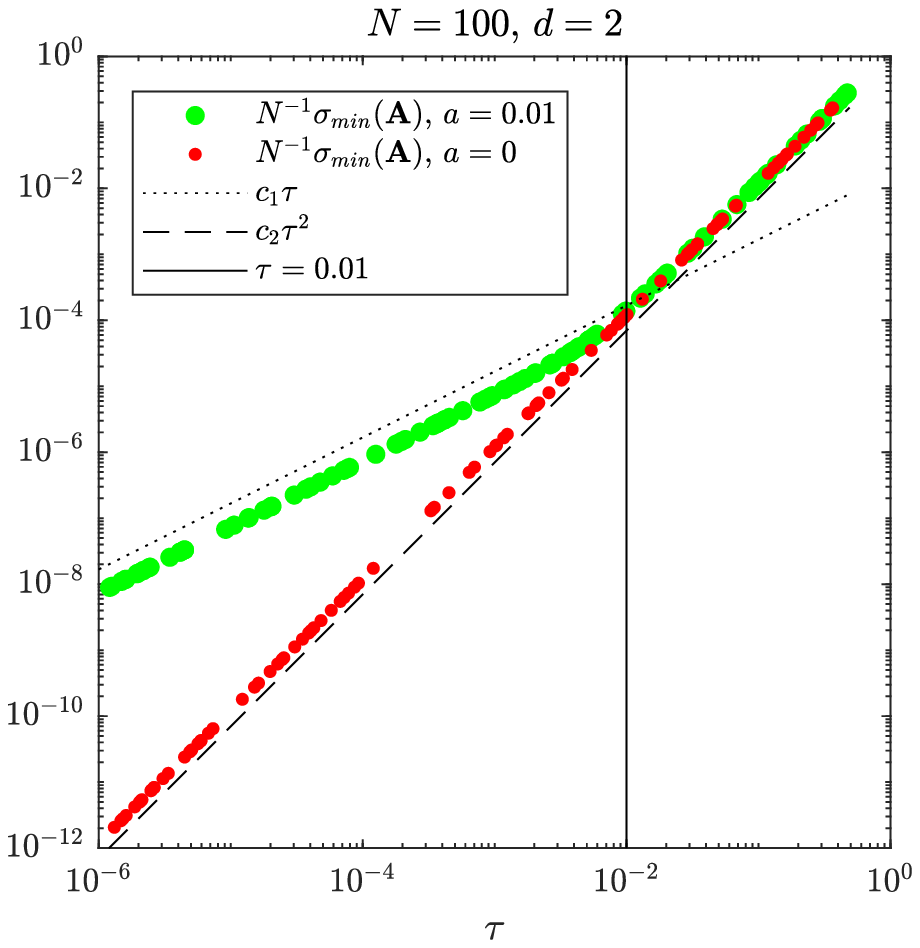}
 \end{subfigure}
 	\caption{Triple cluster, almost antipodal nodes, cf.~Example \ref{ex:tripleCluster}. Left: sketch of node positions. Right: numerical results.}
 \label{fig:tripleCluster}
\end{figure}

%\begin{figure}[h!]
%	\begin{center}
%		\includegraphics[width=0.6\textwidth]{images/paper2_tripleCluster.eps}
%	\end{center}
%	\caption{Triple cluster, $\smin(\mat{A})$, .}
%	\label{fig:tripleCluster}
%\end{figure}

%\section*{Acknowledgements}
\textbf{Acknowledgements.} The authors thank J\"urgen Prestin for discussions on Lemma \ref{la:modDirich} and gratefully acknowledge support by the projects DFG-GK1916 and DFG-SFB944.

% \appendix
% %%%%%%%%%%%%%%%%%%%%%%%%%%%%%%%%%%%%%%%%%%%%%%%%%%%%%%%%%%%%%%%%%%%%%%%%%%%%%%%%%%%%%%%%%%
% \section{Appendix}%%%%%%%%%%%%%%%%%%%%%%%%%%%%%%%%%%%%%%%%%%%%%%%%%%%%%%%%%%%%%%%%%%%%%%%%
% %%%%%%%%%%%%%%%%%%%%%%%%%%%%%%%%%%%%%%%%%%%%%%%%%%%%%%%%%%%%%%%%%%%%%%%%%%%%%%%%%%%%%%%%%%
% \begin{lemma}\label{la:normAbsMat}
% 	Let $M,\widetilde M\in\C^{m\times n}$ with $\abs{M_{j,\ell}}\le \widetilde M_{j,\ell}$ for all $j=1,\hdots,m$, $\ell=1,\hdots,n$, then
% 	\begin{equation*}
% 	\norm{M} \le \norm{\widetilde M}.
% 	\end{equation*}
% \end{lemma}
% \begin{proof}
% 	We directly show the result by
% 	\begin{align*}
% 	\norm{M}&= 	\max_{\norm{x}=1} \norm{Mx} 
% 	= 	\max_{\norm{x}=1} \sum_{k=1}^{m} \abs{\sum_{\ell=1}^{n} M_{k\ell}x_\ell}^2 
% 	\le \max_{\norm{x}=1} \sum_{k=1}^{m} \left(\sum_{\ell=1}^{n} \abs{M_{k\ell}}\abs{x_\ell}\right)^2\\
% 	&\le \max_{\norm{x}=1} \sum_{k=1}^{m} \left(\sum_{\ell=1}^{n} \widetilde M_{k\ell}\abs{x_\ell}\right)^2 = \max_{\norm{x}=1} \sum_{k=1}^{m} \left(\sum_{\ell=1}^{n} \widetilde M_{k\ell}x_\ell\right)^2 = \norm{\widetilde M}.
% 	\end{align*}
% 	Note that similar estimates can be found for the Frobenius norm in \cite[p.~520]{HoJo13}.
% \end{proof}
% 

\bibliographystyle{abbrv}
\bibliography{references}

\begin{thebibliography}{10}

\bibitem{GoYo17}
A.~{Akinshin}, G.~{Goldman}, and Y.~{Yomdin}.
\newblock {Geometry of error amplification in solving Prony system with
  near-colliding nodes}.
\newblock {\em ArXiv e-prints}, Jan. 2017.

\bibitem{AuBo17}
C.~Aubel and H.~B\"{o}lcskei.
\newblock Vandermonde matrices with nodes in the unit disk and the large sieve.
\newblock {\em Appl. Comput. Harmon. Anal.}, 47(1):53--86, jul 2019.

\bibitem{BaDeGoYo18}
D.~{Batenkov}, L.~{Demanet}, G.~{Goldman}, and Y.~{Yomdin}.
\newblock {Stability of partial Fourier matrices with clustered nodes}.
\newblock {\em ArXiv e-prints}, Sept. 2018.

\bibitem{BaGoYo19}
D.~{Batenkov}, G.~{Goldman}, and Y.~{Yomdin}.
\newblock {Super-resolution of near-colliding point sources}.
\newblock {\em arXiv e-prints}, page arXiv:1904.09186, Apr 2019.

\bibitem{Ba99}
F.~S.~V. Baz{\'a}n.
\newblock Conditioning of rectangular {V}andermonde matrices with nodes in the
  unit disk.
\newblock {\em SIAM J. Matrix Anal. Appl.}, 21(2):679--693, 1999.

\bibitem{Pr95}
B.~G.~R. de~Prony.
\newblock Essai {\'e}xperimental et analytique: sur les lois de la
  dilatabilit{\'e} de fluides {\'e}lastique et sur celles de la force expansive
  de la vapeur de l'alkool, a diff{\'e}rentes temp{\'e}ratures.
\newblock {\em Journal de l'{\'e}cole polytechnique}, 1(22):24--76, 1795.

\bibitem{DeNg15}
L.~Demanet and N.~Nguyen.
\newblock The recoverability limit for superresolution via sparsity.
\newblock {\em ArXiv e-prints}, Feb. 2015.

\bibitem{Di19}
B.~{Diederichs}.
\newblock {Well-Posedness of Sparse Frequency Estimation}.
\newblock {\em arXiv e-prints}, May 2019.

\bibitem{Do92}
D.~L. Donoho.
\newblock Superresolution via sparsity constraints.
\newblock {\em SIAM J. Math. Anal.}, 23(5):1309--1331, 1992.

\bibitem{HoJo13}
R.~A. Horn and C.~R. Johnson.
\newblock {\em Matrix Analysis}.
\newblock Cambridge University Press, New~York, USA, 2nd edition, 2013.

\bibitem{HuSa90}
Y.~Hua and T.~K. Sarkar.
\newblock Matrix pencil method for estimating parameters of exponentially
  damped/undamped sinusoids in noise.
\newblock {\em IEEE Trans. Acoust. Speech Signal Process.}, 38(5):814--824,
  1990.

\bibitem{KuMoPeOh17}
S.~Kunis, H.~M. M\"{o}ller, T.~Peter, and U.~von~der Ohe.
\newblock Prony's method under an almost sharp multivariate {I}ngham
  inequality.
\newblock {\em J. Fourier Anal. Appl.}, 24(5):1306--1318, 2018.

\bibitem{KuNa18}
S.~{Kunis} and D.~{Nagel}.
\newblock {On the condition number of Vandermonde matrices with pairs of
  nearly-colliding nodes}.
\newblock {\em arXiv e-prints}, page arXiv:1812.08645, Dec 2018.

\bibitem{KuPo07}
S.~Kunis and D.~Potts.
\newblock Stability results for scattered data interpolation by trigonometric
  polynomials.
\newblock {\em SIAM~J.~Sci.~Comput.}, 29:1403--1419, 2007.

\bibitem{LiLi17}
W.~{Li} and W.~{Liao}.
\newblock {Stable super-resolution limit and smallest singular value of
  restricted Fourier matrices}.
\newblock {\em ArXiv e-prints}, Sept. 2017.

\bibitem{LiLiFa19}
W.~{Li}, W.~{Liao}, and A.~{Fannjiang}.
\newblock {Super-resolution limit of the ESPRIT algorithm}.
\newblock {\em arXiv e-prints}, page arXiv:1905.03782, May 2019.

\bibitem{LiFan2016}
W.~Liao and A.~Fannjiang.
\newblock M{USIC} for single-snapshot spectral estimation: stability and
  super-resolution.
\newblock {\em Appl. Comput. Harmon. Anal.}, 40(1):33--67, 2016.

\bibitem{MaRo08}
L.~Mattner and B.~Roos.
\newblock Maximal probabilities of convolution powers of discrete uniform
  distributions.
\newblock {\em Statist. Probab. Lett.}, 78(17):2992 -- 2996, 2008.

\bibitem{Mo15}
A.~Moitra.
\newblock Super-resolution, extremal functions and the condition number of
  {V}andermonde matrices.
\newblock In {\em S{TOC}'15---{P}roceedings of the 2015 {ACM} {S}ymposium on
  {T}heory of {C}omputing}, pages 821--830. ACM, New York, 2015.

\bibitem{CaMo16}
V.~I. Morgenshtern and E.~J. Cand\`es.
\newblock Super-resolution of positive sources: the discrete setup.
\newblock {\em SIAM J. Imaging Sci.}, 9(1):412--444, 2016.

\bibitem{PoTa2017}
D.~Potts and M.~Tasche.
\newblock Error estimates for the {ESPRIT} algorithm.
\newblock In {\em Large truncated {T}oeplitz matrices, {T}oeplitz operators,
  and related topics}, volume 259 of {\em Oper. Theory Adv. Appl.}, pages
  621--648. Birkh\"{a}user/Springer, Cham, 2017.

\bibitem{RoKa89}
R.~Roy and T.~Kailath.
\newblock {ESPRIT} - estimation of signal parameters via rotational invariance
  techniques.
\newblock {\em IEEE Trans. Acoust. Speech Signal Process.}, 37(7):984--995,
  1989.

\bibitem{Sc86}
R.~O. Schmidt.
\newblock Multiple emitter location and signal parameter estimation.
\newblock {\em IEEE Trans. Antennas and Propagation}, 34(3):276--280, 1986.

\end{thebibliography}

\end{document}